\newcommand{\plim}{\varprojlim}
\newcommand{\mcal}{\mathcal}
\newcommand{\mfrak}{\mathfrak}
\newcommand{\mbb}{\mathbb}
\newcommand{\mrm}{\mathrm}
\newcommand{\mfS}{\mathfrak{S}}
\newcommand{\vphi}{\varphi}
\newcommand{\mfL}{\mathfrak{L}}
\newcommand{\mfM}{\mathfrak{M}}
\newcommand{\mfN}{\mathfrak{N}}
\newcommand{\mft}{\mathfrak{t}}
\newcommand{\mfm}{\mathfrak{m}}
\newcommand{\wh}{\widehat}
\newcommand{\whR}{\widehat{\mathcal{R}}}
\newcommand{\Max}{\mathrm{Max}}
\newtheorem{theorem}{Theorem}
\newtheorem{lemma}[theorem]{Lemma}
\newtheorem{question}[theorem]{Question}
\newtheorem{proposition}[theorem]{Proposition}
\theoremstyle{definition}
\newtheorem{definition}[theorem]{Definition}
\newtheorem{remark}[theorem]{Remark}
\newcommand{\e}{\varepsilon}
\newcommand{\cO}{\mathcal{O}}
\newtheorem*{acknowledgements}{Acknowledgements}
\title{Full faithfulness theorem for torsion crystalline representations}
\author{Yoshiyasu Ozeki\footnote{
Research Institute for Mathematical Sciences, Kyoto University,
Kyoto 606-8502, JAPAN.
\endgraf
e-mail: {\tt yozeki@kurims.kyoto-u.ac.jp}
\endgraf
Partly supported by the Grant-in-Aid for Research Activity Start-up, 
The Ministry of Education, Culture, Sports, Science and Technology, Japan.}}
\date{}
\begin{document}
\maketitle

\begin{abstract}
Mark Kisin proved that a certain ``restriction functor'' on crystalline $p$-adic representations is fully faithful. 
In this paper, we prove the torsion analogue of Kisin's theorem.
\end{abstract}



\section{Introduction}
Let $p>2$ be a prime number and $r,r'\ge 0 $ integers.
Let $K$ be a complete discrete valuation field 
of mixed characteristic $(0,p)$ with perfect residue field
and absolute ramification index $e$.
Let $\pi=\pi_0$ be a uniformizer of $K$ and 
$\pi_n$ a $p^n$-th root of $\pi$ such that $\pi^p_{n+1}=\pi_n$ for all $n\ge 0$.  
Put $K_{\infty}=\bigcup_{n\ge 0}K(\pi_n)$ and 
denote by $G_K$ and $G_{\infty}$ absolute Galois groups of $K$ and $K_{\infty}$,
respectively.
In Theorem (0.2) of \cite{Kis},
Kisin proved that 
the functor ``restriction to $G_{\infty}$'' from crystalline $\mbb{Q}_p$-representations of $G_K$ 
to $\mbb{Q}_p$-representations of $G_{\infty}$
is fully faithful, 
which was a conjecture of Breuil (\cite{Br1}).
Hence we may say that 
crystalline $\mbb{Q}_p$-representations of $G_K$ are characterized by 
their restriction to $G_{\infty}$.
It should be noted that there exists an established theory describing 
representations of $G_{\infty}$ by easy linear algebra data, which is called \'etale $\vphi$-modules, introduced by Fontaine (\cite{Fo} A 1.2). 
In this paper, we are interested in the torsion analogue of the above Kisin's result.
For example, 
Breuil proved in Theorem 3.4.3 of \cite{Br2} that the functor ``restriction to $G_{\infty}$'' 
from finite flat representations
of $G_K$  to torsion $\mbb{Z}_p$-representations of $G_{\infty}$ is fully faithful 
(Remark \ref{Rem} (2)).
Our main theorem is motivated by his result:

\begin{theorem}
\label{Main}
Suppose  $er<p-1$ and $e(r'-1)<p-1$.
Let $T$ $($resp.\ $T')$ be a torsion crystalline $\mbb{Z}_p$-representation of $G_K$
with Hodge-Tate weights in $[0,r]$ $($resp.\ $[0,r'])$.
Then any $G_{\infty}$-equivalent morphism $T\to T'$ is in fact $G_K$-equivalent.

In particular, 
the functor from torsion crystalline $\mbb{Z}_p$-representations of $G_K$
with Hodge-Tate weights in $[0,r]$ to torsion $\mbb{Z}_p$-representations of $G_{\infty}$,
obtained by restricting the action of $G_K$ to $G_{\infty}$, is fully faithful.
\end{theorem}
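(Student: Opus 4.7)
The plan is to reformulate the statement using the theory of torsion Kisin modules together with the additional $\hat G$-structure of Liu, and then to show that under the given numerical constraints this extra structure is already determined by the underlying étale $\vphi$-module.

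First one attaches to $T$ and $T'$ torsion Kisin modules $\mfM$ and $\mfM'$ of $E(u)$-height $\le r$ and $\le r'$ respectively over $\mfS=W(k)[\![u]\!]$, from which the full $G_K$-representations are recovered via the $(\vphi,\hat G)$-module formalism: on $\wh{\mfM}=\whR\otimes_{\mfS}\mfM$ there is a canonical $\hat G$-action commuting with $\vphi$, and $T$ is recovered as a $G_K$-module from this enriched datum. The restriction of $T$ to $G_{\infty}$ corresponds, on the Kisin side, to forgetting the $\hat G$-structure and passing to the étale $\vphi$-module over $\cO_{\E}$. A $G_{\infty}$-equivariant morphism $f\colon T\to T'$ thus produces a morphism between the associated étale $\vphi$-modules.

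The next step is to lift this to a morphism $\tilde f\colon\mfM\to\mfM'$ of Kisin modules. The condition $er<p-1$ guarantees that the functor from torsion Kisin modules of height $\le r$ to torsion $G_{\infty}$-representations is fully faithful on the source category; by dévissage on the $p$-torsion filtrations of $\mfM$ and $\mfM'$, combined with Kisin's rational full faithfulness theorem applied to crystalline lifts, one produces $\tilde f$ inducing $f$ after inverting $u$ and applying $T_{\mfS}$.

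The central difficulty is then showing that $\tilde f$ is compatible with the $\hat G$-action, i.e.\ that $\hat f:=\mrm{id}\otimes\tilde f\colon\wh{\mfM}\to\wh{\mfM}'$ commutes with $\hat G$. Commutation with $G_{\infty}$ is automatic, so by the structure of $\hat G$ as an extension of $\mrm{Gal}(K_{\infty}/K)$ by $\mrm{Gal}(K_{\infty}(\mu_{p^{\infty}})/K_{\infty})$, it suffices to check commutation with a topological generator $\tau$ of the latter. The defect $\hat f\circ\tau-\tau\circ\hat f$ is a $\vphi$-equivariant map taking values in a specific submodule of $\wh{\mfM}'$ divisible by an element that measures $(\tau-\mrm{id})$ on $\whR$, and a careful comparison of $E(u)$-adic and $\mft$-adic estimates in $\whR$, exploiting the sharper target bound $e(r'-1)<p-1$, forces this defect to vanish. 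This is the main technical obstacle. Once cleared, $\tilde f$ corresponds back to a $G_K$-equivariant map $T\to T'$ equal to $f$, establishing full faithfulness.
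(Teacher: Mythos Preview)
Your overall strategy matches the paper's: pass to torsion $(\vphi,\hat G)$-modules, lift the $G_{\infty}$-map to the Kisin level, then verify $\tau$-compatibility by valuation estimates. Two points need attention.

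First, the functors $T_{\mfS}$ and $\hat T$ are \emph{contravariant}, so a $G_{\infty}$-map $T\to T'$ corresponds to a Kisin-module map $\mfM'\to\mfM$, not $\mfM\to\mfM'$. With the correct direction, $er<p-1$ is a condition on the \emph{target} $\mfM$ and $e(r'-1)<p-1$ on the \emph{source} $\mfM'$. The paper's key Lemma~\ref{lem2} shows that the $\tau$-compatibility argument uses precisely the source-height bound (one writes $E(u)^{r'}x=\sum a_i\vphi(y_i)$ for $x\in\mfM'$ and iterates), so your assignment of the two numerical conditions to the two steps is correct once the direction is fixed---but your explanation of \emph{why} each condition enters where it does is reversed.

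Second, the lifting step is where the substantive input lies, and your sketch (``d\'evissage on $p$-torsion filtrations combined with Kisin's rational full faithfulness applied to crystalline lifts'') is not what the paper does and does not obviously go through: Kisin's theorem is about $\mbb{Q}_p$-representations and says nothing directly about lifting torsion maps between Kisin modules of different heights. The paper instead uses the Caruso--Liu theory of \emph{maximal} torsion Kisin modules: $T_{\mfS}$ is fully faithful on the category $\Max_{/\mfS_{\infty}}$, and the condition $er<p-1$ forces $\mfM$ to already be maximal (Lemma~\ref{Lem2}, proved by d\'evissage from the rank-$1$ case). One then obtains the lift via
\[
\mrm{Hom}(\mfM',\mfM)\ \simeq\ \mrm{Hom}(\Max(\mfM'),\mfM)\ \overset{T_{\mfS}}{\simeq}\ \mrm{Hom}_{G_{\infty}}(T,T').
\]
Without this maximal-model ingredient, the lifting step is a genuine gap in your plan.
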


\noindent
Here a torsion $\mbb{Z}_p$-representation of $G_K$ is said to be 
{\it torsion crystalline with Hodge-Tate weights in $[0,r]$}
if it can be written as the quotient of two lattices in some 
crystalline $\mbb{Q}_p$-representation of $G_K$ with Hodge-Tate weights in $[0,r]$.
For example, a torsion $\mbb{Z}_p$-representation of $G_K$ is finite flat if and only if 
it is torsion crystalline with Hodge-Tate weights in $[0,1]$ (Remark \ref{Rem} (2)).
If $e=1$, the latter part of Theorem \ref{Main} has been proven 
by Breuil via Fontaine-Laffaille theory (Remark \ref{Rem} (3)). 
On the other hand,  our proof is based on results on Kisin modules and $(\vphi,\hat{G})$-modules
(the notion of $(\vphi,\hat{G})$-modules is introduced in \cite{Li2}).
More precisely, we use maximal models for Kisin modules 
introduced in \cite{CL1} and results on ``the range of monodromy'' for $(\vphi,\hat{G})$-modules
given in Section 4 of \cite{GLS}. 

It seems natural to have the question whether
the condition ``$er<p-1$'' in the latter part of Theorem \ref{Main} is necessary and sufficient for the full faithfulness or not.
In fact, we know that the condition ``$er<p-1$'' is not necessary since our restriction functor 
is fully faithful for any $e$ when $r=1$ (Remark \ref{Rem} (2)).
(Maybe the necessary and sufficient condition for the full faithfulness is 
``$e(r-1)<p-1$'' (Remark \ref{Rem}).)
In addition, in the last section, we give some examples such that the restriction functor appeared in Theorem  \ref{Main}
is not full under some choices of  $K$ and $r$ which do not satisfy ``$er<p-1$''
(more precisely, ``$e(r-1)<p-1$'').
Examples are mainly given by using two methods:
The first one is direct computations of Galois cohomologies, which is a purely local method. 
The second one is based on the classical Serre's modularity conjecture, which is a global method.

\begin{acknowledgements}
It is a pleasure to thank Wansu Kim  
for useful
comments and correspondences to Theorem \ref{Main}. 
The author thanks Naoki Imai and Akio Tamagawa 
who gave him useful advice in the proof of his main theorem.
The author thanks also Keisuke Arai, Shin Hattori, Yuichiro Taguchi and Seidai Yasuda for their helpful comments on Proposition \ref{Prop3}. 
This work is supported by the Grant-in-Aid for Young Scientists Start-up.
\end{acknowledgements}


\section{Preliminaries}

Throughout this paper, 
we fix a prime number $p>2$.
Let $r\ge 0$ be an integer.
Let $k$ be a perfect field of 
characteristic $p$,
$W(k)$ its ring of Witt vectors, 
$K_0=W(k)[1/p]$, $K$ a finite totally 
ramified extension of $K_0$,
$\overline{K}$ a fixed algebraic closure of $K$
and $G_K=\mrm{Gal}(\overline{K}/K)$.
Fix a uniformizer $\pi\in K$ 
and denote by $E(u)$ its 
Eisenstein polynomial over $K_0$.
For any integer $n\ge 0$,
let $\pi_n\in \overline{K}$ be 
a $p^n$-th root of $\pi$ such that 
$\pi^p_{n+1}=\pi_n$.
Let $K_{\infty}=\bigcup_{n\ge 0}K(\pi_n)$ and 
$G_{\infty}=\mrm{Gal}(\overline{K}/K_{\infty})$.

For any topological group $H$,
we denote by 
$\mrm{Rep}_{\mrm{tor}}(H)$ 
(resp.\ $\mrm{Rep}_{\mbb{Z}_p}(H)$)
the category of finite torsion $\mbb{Z}_p$-representations 
of $H$
(resp.\ the category of 
finite free $\mbb{Z}_p$-representations of $H$).
We denote by $\mrm{Rep}^r_{\mbb{Z}_p}(G_K)$
the category of lattices in crystalline $\mbb{Q}_p$-representations 
of $G_K$ with Hodge-Tate weights in $[0,r]$. 
We say that $T\in \mrm{Rep}_{\mrm{tor}}(G_K)$ 
is {\it torsion crystalline with Hodge-Tate weights in $[0,r]$}
if it can be written as the quotient of $L'\subset L$ in $\mrm{Rep}^r_{\mbb{Z}_p}(G_K)$,
and denote by $\mrm{Rep}^r_{\mrm{tor}}(G_K)$ the category of them.

Let $R=\plim \cO_{\overline{K}}/p$ 
where $\cO_{\overline{K}}$ is 
the integer ring of $\overline{K}$
and the transition maps are 
given by the $p$-th power map.
Write
\underbar{$\pi$} $=(\pi_n)_{n\ge 0}\in R$
and let $[$\underbar{$\pi$}$]\in W(R)$ 
be the Teichm\"uller
representative of \underbar{$\pi$}.
Let $\mfS=W(k)[\![u]\!]$ equipped 
with a Frobenius endomorphism
$\varphi$ given by $u\mapsto u^p$ and 
the Frobenius on $W(k)$. 
We embed the $W(k)$-algebra $W(k)[u]$ into $W(R)$
via the map $u\mapsto [$\underbar{$\pi$}$]$.
This embedding extends to an 
embedding $\mfS\hookrightarrow W(R)$, 
which is compatible with Frobenius endomorphisms.

A {\it $\vphi$-module} ({\it over $\mfS$}) 
is an $\mfS$-module 
$\mfM$ equipped with a $\vphi$-semilinear map 
$\vphi\colon \mfM\to \mfM$.
A morphism between two $\vphi$-modules 
$(\mfM_1,\vphi_1)$ and $(\mfM_2,\vphi_2)$
is an $\mfS$-linear map $\mfM_1\to \mfM_2$
compatible with $\vphi_1$ and $\vphi_2$.
Denote by $'\mrm{Mod}^r_{/\mfS}$
the category of $\vphi$-modules $(\mfM,\vphi)$ 
{\it of height $\le r$}  
in the sense that $\mfM$ is of finite type 
over $\mfS$ and the cokernel of 
$1\otimes \vphi\colon \mfS\otimes_{\vphi,\mfS}\mfM\to \mfM$
is killed by $E(u)^r$.
Let $\mrm{Mod}^r_{/\mfS_{\infty}}$
be the full subcategory of $'\mrm{Mod}^r_{/\mfS}$
consisting of finite $\mfS$-modules 
which are killed by some power of $p$ and have projective dimension $1$ 
in the sense that $\mfM$ has a two term resolution  by
finite free $\mfS$-modules.
Let $\mrm{Mod}^{r}_{/\mfS}$
be the full subcategory of 
$'\mrm{Mod}^r_{/\mfS}$
consisting of finite free $\mfS$-modules.
We call an object of $\mrm{Mod}^r_{/\mfS_{\infty}}$ (resp.\ $\mrm{Mod}^r_{/\mfS}$)
a {\it torsion Kisin module} (resp.\ a {\it free Kisin module}).
A {\it Kisin module} is a torsion Kisin module or a free Kisin module.
For any Kisin module $\mfM$,
we define a $\mbb{Z}_p$-representation $T_{\mfS}(\mfM)$ of $G_{\infty}$ by 
\begin{align*}
T_{\mfS}(\mfM)=
\left\{
\begin{array}{ll}
\mrm{Hom}_{\mfS,\vphi}(\mfM,\mbb{Q}_p/\mbb{Z}_p\otimes_{\mbb{Z}_p}\mfS^{\mrm{ur}})\quad {\rm if}\ \mfM\ {\rm is}\ {\rm torsion}
\cr
\mrm{Hom}_{\mfS,\vphi}(\mfM,\mfS^{\mrm{ur}})\quad {\rm if}\ \mfM\ {\rm is}\ {\rm free}.  
\end{array}
\right.
\end{align*}
\noindent
Here, a $G_{\infty}$-action on 
$T_{\mfS}(\mfM)$ is given by 
$(\sigma.f)(x)=\sigma(f(x))$ 
for $\sigma\in G_{\infty}, f\in T_{\mfS}(\mfM), x\in \mfM$.

Here we recall the theory of Liu's $(\vphi,\hat{G})$-modules (cf.\ \cite{Li2}). 
Let $S$ be the $p$-adic completion 
of the divided power envelope of $W(k)[u]$ with respect to the ideal 
generated by $E(u)$.  
There exists a unique Frobenius map $\vphi\colon S\to S$
defined by $\varphi(u)=u^p$.
Put $S_{K_0}=S[1/p]=K_0\otimes_{W(k)} S$.
The inclusion $W(k)[u]\hookrightarrow W(R)$ 
via the map $u\mapsto [$\underbar{$\pi$}$]$
induces $\vphi$-compatible inclusions 
$\mfS\hookrightarrow S\hookrightarrow A_{\mrm{cris}}$
and $S_{K_0}\hookrightarrow B^+_{\mrm{cris}}$.
Fix a choice of primitive $p^i$-root 
of unity $\zeta_{p^i}$ for $i\ge 0$
such that $\zeta^p_{p^{i+1}}=\zeta_{p^i}$.
Put \underbar{$\e$} $=(\zeta_{p^i})_{i\ge 0}\in R^{\times}$
and $t=\mrm{log}([$\underbar{$\e$}$])\in A_{\mrm{cris}}$.
Denote by $\nu\colon W(R)\to W(\overline{k})$ 
a unique lift of the projection $R\to \overline{k}$,
which extends to a map 
$\nu \colon B^+_{\mrm{cris}}\to W(\overline{k})[1/p]$.
For any subring $A\subset B^+_{\mrm{cris}}$,
we put 
$I_+A=\mrm{Ker}(\nu\ \mrm{on}\  B^+_{\mrm{cris}})\cap A$.
For any integer $n\ge 0$,
let $t^{\{n\}}=t^{r(n)}\gamma_{\tilde{q}(n)}(\frac{t^{p-1}}{p})$ 
where $n=(p-1)\tilde{q}(n)+r(n)$ with $\tilde{q}(n)\ge 0,\ 0\le r(n) <p-1$
and $\gamma_i(x)=\frac{x^i}{i!}$ is 
the standard divided power.
We define a subring $\mcal{R}_{K_0}$ of $B^+_{\mrm{cris}}$
as below:
\[
\mcal{R}_{K_0}=\{\sum^{\infty}_{i=0} f_it^{\{i\}}\mid f_i\in S_{K_0}\
\mrm{and}\ f_i\to 0\ \mrm{as}\ i\to \infty\}.
\]
Put $\wh{\mcal{R}}=\mcal{R}_{K_0}\cap W(R)$
and $I_+=I_+\wh{\mcal{R}}$.
Put $\hat{K}=\bigcup_{n\ge 0} K_{\infty}(\zeta_{p^n})$ and 
$\hat{G}=\mrm{Gal}(\hat{K}/K)$.
Lemma 2.2.1 in \cite{Li2} shows that $\wh{\mcal{R}}$ $($resp.\ $\mcal{R}_{K_0})$ 
is a $\vphi$-stable $\mfS$-algebra
as a subring in $W(R)$ $($resp.\ $B^+_{\mrm{cris}})$, and $\nu$ induces 
$\mcal{R}_{K_0}/I_+\mcal{R}_{K_0}\simeq K_0$ and 
$\wh{\mcal{R}}/I_+\simeq S/I_+S\simeq \mfS/I_+\mfS\simeq W(k)$.
Furthermore, $\wh{\mcal{R}}, I_+, \mcal{R}_{K_0}$ and $I_+\mcal{R}_{K_0}$ are $G_K$-stable, and 
$G_K$-actions on them
factors through $\hat{G}$.
For any Kisin module $\mfM$,
we equip $\whR\otimes_{\vphi,\mfS} \mfM$ with 
a Frobenius by
$\vphi_{\wh{\mcal{R}}}\otimes \vphi_{\mfM}$.
It is known that the natural map
$
\mfM\rightarrow \wh{\mcal{R}}\otimes_{\vphi, \mfS} \mfM
$
given by $x\mapsto 1\otimes x$ is an injection (\cite{CL2}, Section 3.1).
By this injection, 
we regard $\mfM$ as a $\vphi(\mfS)$-stable 
submodule of $\whR\otimes_{\vphi,\mfS} \mfM$.

\begin{definition}
\label{Liumod}
A {\it $(\vphi, \hat{G})$-module} $(${\it of height} $\le r$$)$ 
is a triple $\hat{\mfM}=(\mfM, \vphi_{\mfM}, \hat{G})$ where
\begin{enumerate}
\item[(1)] $(\mfM, \vphi_{\mfM})$ is a 
           Kisin module (of height $\le r$), 

\vspace{-2mm}
           
\item[(2)] $\hat{G}$ is an $\wh{\mcal{R}}$-semilinear
           $\hat{G}$-action on $\whR\otimes_{\vphi, \mfS} \mfM$,
           
\vspace{-2mm}           
           
\item[(3)] the $\hat{G}$-action commutes with $\vphi_{\wh{\mcal{R}}}\otimes \vphi_{\mfM}$,

\vspace{-2mm}

\item[(4)] $\mfM\subset (\whR\otimes_{\vphi,\mfS} \mfM)^{H_K}$ where $H_K=\mrm{Gal}(\hat{K}/K_{\infty})$,

\vspace{-2mm}

\item[(5)] $\hat{G}$ acts on the 
           $W(k)$-module $(\whR\otimes_{\vphi,\mfS} \mfM)/I_+(\whR\otimes_{\vphi,\mfS} \mfM)$ 
           trivially.           
\end{enumerate}
If $\mfM$ is a torsion 
(resp.\ free) Kisin module,
we call $\hat{\mfM}$ a 
{\it torsion} 
(resp.\ {\it free}) {\it $(\vphi, \hat{G})$-module}.
\end{definition}

A morphism 
between two $(\vphi, \hat{G})$-modules $\hat{\mfM}_1=(\mfM_1, \vphi_1, \hat{G})$ 
and $\hat{\mfM}_2=(\mfM_2, \vphi_2, \hat{G})$ is a morphism 
$f\colon \mfM_1\to \mfM_2$ of $\vphi$-modules 
such that 
$\wh{\mcal{R}}\otimes f\colon \whR\otimes_{\vphi,\mfS} \mfM_1\to \whR\otimes_{\vphi,\mfS} \mfM_2$
is $\hat{G}$-equivalent.
We denote by $\mrm{Mod}^{r,\hat{G}}_{/\mfS_{\infty}}$
(resp.\ $\mrm{Mod}^{r,\hat{G}}_{/\mfS}$) the 
category of torsion $(\vphi, \hat{G})$-modules of height $\le r$
(resp.\ free $(\vphi, \hat{G})$-modules of height $\le r$).
We often regard $\whR\otimes_{\vphi,\vphi} \mfM$ as a $G_K$-module 
via the projection $G_K\twoheadrightarrow \hat{G}$. 
A sequence
$0\to \hat{\mfM}' \to \hat{\mfM} \to \hat{\mfM}''\to 0$
of $(\vphi,\hat{G})$-modules is {\it exact} if it is exact as $\mfS$-modules.
For a $(\vphi, \hat{G})$-module $\hat{\mfM}$,
we define a $\mbb{Z}_p$-representation $\hat{T}(\hat{\mfM})$ of $G_K$ by
\begin{align*}
\hat{T}(\hat{\mfM})=
\left\{
\begin{array}{ll}
\mrm{Hom}_{\wh{\mcal{R}},\vphi}(\whR\otimes_{\vphi,\mfS} \mfM, \mbb{Q}_p/\mbb{Z}_p\otimes_{\mbb{Z}_p}W(R))\quad 
{\rm if}\ \mfM\ {\rm is}\ {\rm torsion}  
\cr
\mrm{Hom}_{\wh{\mcal{R}},\vphi}(\whR\otimes_{\vphi,\mfS} \mfM, W(R))\quad {\rm if}\ \mfM\ {\rm is}\ {\rm free}. 
\end{array}
\right.
\end{align*}
\noindent
Here,  $G_K$ 
acts on $\hat{T}(\hat{\mfM})$ by $(\sigma.f)(x)=\sigma(f(\sigma^{-1}(x)))$
for $\sigma\in G_K,\ f\in \hat{T}(\hat{\mfM}),\ x\in \whR\otimes_{\vphi,\mfS}\mfM$. 
Then, there exists a natural $G_{\infty}$-equivalent map 
\[
\theta\colon T_{\mfS}(\mfM)\to \hat{T}(\hat{\mfM})
\]
defined by 
$\theta(f)(a\otimes m)=a\vphi(f(m))$ for 
$f\in T_{\mfS}(\mfM),\ a\in \wh{\mcal{R}}, m\in \mfM$.

Fix a topological generator $\tau$ of $\mrm{Gal}(\hat{K}/K_{p^{\infty}})$
where $K_{p^{\infty}}=\bigcup_{n\ge 0} K(\zeta_{p^n})$.
We may suppose that $\zeta_{p^n}=\tau(\pi_n)/\pi_n$ for all $n$, and this implies 
$\tau(u)=[\underline{\e}]u$ in $W(R)$.
There exists $\mft\in W(R)\smallsetminus pW(R)$ such that 
$\vphi(\mft)=pE(0)^{-1}E(u)\mft$. 
Such $\mft$ is unique up to units of $\mbb{Z}_p$ (cf.\ Example 2.3.5 of \cite{Li1}).  
The following theorems play important rolls  in the proof of 
Theorem \ref{Main}. 

\begin{theorem}[\cite{Li2}]
\label{Th1}
$(1)$ The map
$\theta\colon T_{\mfS}(\mfM)\to \hat{T}(\hat{\mfM})$
is an isomorphism.

\noindent
$(2)$ The contravariant functor $\hat{T}$ induces an anti-equivalence 
between the category $\mrm{Mod}^{r,\hat{G}}_{\mfS}$ 
of free $(\vphi,\hat{G})$-modules  of height $\le r$
and 
the category
of $G_K$-stable $\mbb{Z}_p$-lattices in semi-stable 
$\mbb{Q}_p$-representations of $G_K$ with Hodge-Tate weights in $[0,r]$.
\end{theorem}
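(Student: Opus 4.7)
The plan is to prove part (1) by direct manipulation, exploiting that $\vphi$ is bijective on $\mfS^{\mrm{ur}}$ while only injective on $W(R)$, and to deduce part (2) by combining (1) with Kisin's existing equivalence between $\mrm{Mod}^r_{/\mfS}$ and $\mbb{Z}_p$-lattices in semi-stable $G_K$-representations of Hodge--Tate weights in $[0,r]$, viewed at the level of $G_{\infty}$-modules.

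For part (1), first I verify that $\theta(f)$ is a well-defined $\whR$-linear $\vphi$-compatible map: the tensor relation $a\vphi(s)\otimes m = a\otimes sm$ is respected because $f$ is $\mfS$-linear, and $\vphi$-compatibility transfers via $\vphi_{\whR}\otimes\vphi_{\mfM}$ from the $\vphi$-compatibility of $f$. Injectivity of $\theta$ is immediate from $\theta(f)(1\otimes m) = \vphi(f(m))$ together with the injectivity of $\vphi$ on $W(R)$ (and on $\mbb{Q}_p/\mbb{Z}_p\otimes_{\mbb{Z}_p} W(R)$ in the torsion case). For surjectivity, any preimage of $g\in\hat{T}(\hat{\mfM})$ must satisfy $\vphi(f(m))=g(1\otimes m)$, so I define $f(m):=\vphi^{-1}(g(1\otimes m))$ and must check the right-hand side lies in $\vphi(\mfS^{\mrm{ur}})$. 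The map $m\mapsto g(1\otimes m)$ is $\vphi$-semilinear over $\mfS$ and intertwines $\vphi_{\mfM}$ with $\vphi_{W(R)}$, so it is a morphism of $\vphi$-modules from $\mfM$ to $W(R)$ over $\vphi(\mfS)$; by the defining property of $\mfS^{\mrm{ur}}$ as the maximal $\vphi$-stable subring of $W(R)$ on which $\vphi$ is bijective, the image lies in $\vphi(\mfS^{\mrm{ur}})$, and $\vphi^{-1}$ is well defined there. The torsion case follows by choosing a two-term free resolution and applying the free case termwise.

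For part (2), fully faithfulness is a quick consequence of (1): a $G_K$-equivariant morphism $\hat{T}(\hat{\mfM}_2)\to\hat{T}(\hat{\mfM}_1)$ identifies via $\theta^{-1}$ with a $G_{\infty}$-equivariant morphism $T_{\mfS}(\mfM_2)\to T_{\mfS}(\mfM_1)$; by Kisin's anti-equivalence this comes from a morphism $h:\mfM_1\to\mfM_2$ in $\mrm{Mod}^r_{/\mfS}$, and the induced $\vphi$-compatible map $1\otimes h:\whR\otimes_{\vphi,\mfS}\mfM_1\to\whR\otimes_{\vphi,\mfS}\mfM_2$ is then $\hat{G}$-equivariant because $\theta$ intertwines the $\hat{G}$-action (axiom (2)) with the $G_K$-action on the representation.

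The main obstacle is essential surjectivity. Given a $G_K$-stable lattice $L$ in a semi-stable representation with Hodge--Tate weights in $[0,r]$, Kisin produces a free Kisin module $\mfM$ of height $\le r$ together with a $G_{\infty}$-equivariant isomorphism $T_{\mfS}(\mfM)\simeq L^{\vee}$; via $\theta$ this yields an identification of $L^{\vee}$ with the set of $\whR$-linear $\vphi$-compatible maps $\whR\otimes_{\vphi,\mfS}\mfM\to W(R)$, which transports the $G_K$-action from $L^{\vee}$ to a $G_K$-action on $W(R)\otimes_{\vphi,\mfS}\mfM$. The substantive task is then to show (a) this $G_K$-action preserves $\whR\otimes_{\vphi,\mfS}\mfM\subset W(R)\otimes_{\vphi,\mfS}\mfM$, (b) it factors through $\hat{G}$, and (c) axioms (4) and (5) of Definition \ref{Liumod} hold. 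Writing $\tau$ for a topological generator of $\mrm{Gal}(\hat{K}/K_{p^{\infty}})$ and using $\tau(u)=[\underline{\e}]u$, one expands $\tau(1\otimes m)-1\otimes m$ as a series in $t=\mrm{log}([\underline{\e}])$ whose coefficients involve iterates of $\vphi(\mft)/\mft = pE(0)^{-1}E(u)$; the height $\le r$ bound, combined with the divided-power structure built into $\whR$ via the $t^{\{n\}}$, is exactly what forces this expansion to converge inside $\whR\otimes_{\vphi,\mfS}\mfM$. Axiom (4) is automatic since $H_K$ fixes $u=[\underline{\pi}]$, and axiom (5) follows from $t\in I_+$. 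This convergence analysis inside $\whR$ is the technical heart of the argument.
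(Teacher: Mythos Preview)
The paper does not prove this theorem; it is quoted from \cite{Li2} without argument. So there is no ``paper's proof'' to compare against, and what matters is whether your sketch stands on its own.

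It does not, because your basic picture of where Frobenius is bijective is inverted. Since $R$ is perfect, the Witt vector Frobenius $\vphi$ is \emph{bijective} on $W(R)$, not merely injective; conversely, $\vphi$ is \emph{not} bijective on $\mfS^{\mrm{ur}}$ (already modulo $p$ the element $u$ has no $p$-th root in $k((u))^{\mrm{sep}}$). Your surjectivity argument for (1) hinges on the claim that $\mfS^{\mrm{ur}}$ is ``the maximal $\vphi$-stable subring of $W(R)$ on which $\vphi$ is bijective,'' which is therefore meaningless: that description would pick out all of $W(R)$. Concretely, given $g\in\hat{T}(\hat{\mfM})$ you can certainly set $f(m)=\vphi^{-1}(g(1\otimes m))$ as an element of $W(R)$, and it is $\mfS$-linear and $\vphi$-compatible as you say; the entire content is that such an $f$ automatically takes values in $\mfS^{\mrm{ur}}\subset W(R)$. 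This is a genuine lemma (in \cite{Li2} it is essentially the statement that $\mrm{Hom}_{\mfS,\vphi}(\mfM,W(R))=\mrm{Hom}_{\mfS,\vphi}(\mfM,\mfS^{\mrm{ur}})$), and it uses the finite-height condition together with the characterization of $\mfS^{\mrm{ur}}$ via $\cO_{\widehat{\mathcal{E}^{\mrm{ur}}}}\cap W(R)$ and the theory of \'etale $\vphi$-modules. Your sketch supplies no mechanism for this step.

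For (2), your outline of full faithfulness is fine once (1) is in hand, but the essential surjectivity paragraph is really a description of what must be shown rather than an argument: producing the $\hat{G}$-action on $\whR\otimes_{\vphi,\mfS}\mfM$ and verifying the axioms is exactly the substance of \cite{Li2}, and it proceeds via the Breuil module $\mcal{D}=S_{K_0}\otimes_{\vphi,\mfS}\mfM$ and its monodromy operator rather than by a direct series expansion in $t$ on $\mfM$. If you want a self-contained proof you should consult \cite{Li2} for the actual construction.
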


\begin{theorem}[\cite{CL2}, Theorem 3.1.3 (4),  \cite{GLS}, Proposition 5.9]
\label{Th3}
Let $T\in \mrm{Rep}^r_{\mrm{tor}}(G_K)$
and take $L'\subset L$ in $\mrm{Rep}^r_{\mbb{Z}_p}(G_K)$ such that $T\simeq L/L'$.

\noindent
$(1)$ There exists an exact sequence 
$
\mathscr{S}\colon 0\to \hat{\mfL}\to \hat{\mfL}'\to \hat{\mfM}\to 0
$
of $(\vphi,\hat{G})$-modules
such that:
\begin{enumerate}
\item $\hat{\mfL}$ and $\hat{\mfL}'$ are free $(\vphi,\hat{G})$-modules of height $\le r$,
\item $\hat{\mfM}$ is a torsion $(\vphi,\hat{G})$-module of height $\le r$,
\item $\hat{T}(\mathscr{S})$ is equivalent to the exact sequence 
$0\to L'\to L\to T\to 0$ of $\mbb{Z}_p[G_K]$-modules.
\end{enumerate}

\noindent
$(2)$ Let $\hat{\mfM}$ be as in $(1)$. 
For any $x\in \mfM$, we have
$
\tau(x)-x\in u^p\vphi(\mft)(W(R)\otimes_{\vphi,\mfS} \mfM).
$
\end{theorem}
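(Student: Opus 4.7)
For part (1), my plan is to apply Theorem \ref{Th1}(2) to produce free $(\vphi,\hat{G})$-modules $\hat{\mfL}$ and $\hat{\mfL}'$ corresponding (contravariantly) to $L$ and $L'$; the inclusion $L'\hookrightarrow L$ then yields a morphism $\hat{\mfL}\to\hat{\mfL}'$ of $(\vphi,\hat{G})$-modules, and I take $\hat{\mfM}$ to be its cokernel — the underlying $\mfS$-module is $\mfL'/\mfL$, and $\whR\otimes_{\vphi,\mfS}\mfM$ inherits the quotient $\hat{G}$-action and Frobenius from $\whR\otimes_{\vphi,\mfS}\mfL'$. The checks to perform are: (a) $\mfL\to\mfL'$ is injective with cokernel of projective dimension one over $\mfS$, so that $\mfM\in\mrm{Mod}^r_{/\mfS_\infty}$; (b) the height bound $\le r$ is preserved on the quotient, by a standard snake-lemma argument applied to the $1\otimes\vphi$ maps; (c) axioms (2)--(5) of Definition \ref{Liumod} descend to $\whR\otimes_{\vphi,\mfS}\mfM$, using that $\whR\otimes_{\vphi,\mfS}(-)$ is right exact and that the $G_K$-stable ideal $I_+$ behaves well under quotients; (d) $\hat{T}$ applied to $\mathscr{S}$ recovers $0\to L'\to L\to T\to 0$, which by Theorem \ref{Th1}(1) reduces to the analogous statement for $T_\mfS$.

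The principal obstacle in (1) is that a lattice admits many Kisin-module models, and an arbitrary choice of $\mfL$ and $\mfL'$ need not be functorial in the inclusion $L'\hookrightarrow L$ — and even when functoriality holds, the cokernel need not again be of bounded height $\le r$. The fix is to use the \emph{maximal} Kisin model of Caruso--Liu (\cite{CL1}): taking $\mfL$ and $\mfL'$ to be the maximal Kisin models inside the common ambient étale $\vphi$-module attached to $L$ (equivalently $L'$) via Fontaine's equivalence produces a canonical injection $\mfL\hookrightarrow\mfL'$ by functoriality of the maximal construction, and the maximality simultaneously guarantees that the resulting quotient $\mfL'/\mfL$ is a torsion Kisin module of height $\le r$ with well-defined $(\vphi,\hat{G})$-structure.

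For part (2), the plan is a direct computation of $\tau$ on the image of $\mfM$ inside $W(R)\otimes_{\vphi,\mfS}\mfM$, first in the free case and then deduced in the torsion case via (1). In the free case, $\mfM$ comes from a lattice in a semi-stable (in fact crystalline) representation, so Liu's explicit description of the $\hat{G}$-action on $\whR\otimes_{\vphi,\mfS}\mfM$ provides a formula for $\tau(x)-x$ in terms of $[\underline{\e}]-1$ with divisibilities controlled by $E(u)^r$ from the height condition. The key identity $\vphi(\mft)=pE(0)^{-1}E(u)\mft$ converts $E(u)$-divisibilities into $\vphi(\mft)$-divisibilities, while the $u^p$ factor emerges from the Frobenius twist in $\whR\otimes_{\vphi,\mfS}\mfM$ combined with the formula $\tau(u)=[\underline{\e}]u$, which forces $\tau-1$ to land in the ideal generated by $\vphi(u)=u^p$ times a monodromy defect. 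The torsion case then follows by lifting $x\in\mfM$ to $\tilde{x}\in\mfL'$, applying the free bound to $\tilde{x}$, and reducing modulo $\mfL$ — the lift exists precisely by the exact sequence constructed in (1). The hardest step is isolating the $u^p$ factor from the subtler $\vphi(\mft)$ factor in the free case — this is the substance of Proposition 5.9 of \cite{GLS} and requires careful bookkeeping of the $\hat{G}$-action on a basis of $\whR\otimes_{\vphi,\mfS}\mfL$, together with the observation that the extra Frobenius twist in the construction of the $(\vphi,\hat G)$-module buys one an extra power of $u^p$ over what naive estimates on $[\underline{\e}]-1$ alone would give.
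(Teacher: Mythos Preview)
Your outline for part (2) is correct and matches the paper's: the paper simply records that (2) is an easy consequence of \cite{GLS}, Proposition~5.9 (adding the remark that the arguments of Sections~4.1--4.2 of \cite{GLS} go through for any complete discrete valuation field of mixed characteristic with perfect residue field, not only finite extensions of $\mbb{Q}_p$). Your reduction --- lift $x\in\mfM$ to $\tilde{x}\in\mfL'$, apply the free-case bound from \cite{GLS}, then push down --- is exactly how one makes this precise.

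For part (1), however, your ``principal obstacle'' is misidentified, and the proposed fix via maximal models is both unnecessary and potentially harmful. Theorem~\ref{Th1}(2) is an \emph{anti-equivalence} of categories, so the free $(\vphi,\hat{G})$-modules $\hat{\mfL}$ and $\hat{\mfL}'$ are determined up to unique isomorphism by $L$ and $L'$, and the morphism $\hat{\mfL}\to\hat{\mfL}'$ is uniquely determined by the inclusion $L'\hookrightarrow L$; there is no ambiguity to resolve. (Even at the level of underlying free Kisin modules, uniqueness already holds: $T_{\mfS}$ is fully faithful on $\mrm{Mod}^r_{/\mfS}$.) Moreover, the maximal-model construction of \cite{CL1} is taken with respect to \emph{finite} height (the case ``$r=\infty$'' in the paper's terminology), so passing to maximal models could in principle increase the height beyond $r$; your claim that maximality ``simultaneously guarantees'' height $\le r$ on the quotient is unjustified and is in tension with how the paper itself uses maximal models (cf.\ Lemma~\ref{Lem2}, where objects of $\mrm{Mod}^r_{/\mfS_\infty}$ are shown to be maximal only under the extra hypothesis $er<p-1$). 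The correct argument is simply the direct one you sketched first: apply Theorem~\ref{Th1}(2), take the cokernel, and verify your items (a)--(d). Injectivity of $\mfL\to\mfL'$ follows because it becomes an isomorphism after inverting $p$, and the height bound on the cokernel follows from the snake lemma applied to the maps $1\otimes\vphi$ for $\mfL$ and $\mfL'$. This is the content of \cite{CL2}, Theorem~3.1.3(4), which the paper simply cites.
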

\begin{proof}
The assertion (2) is an easy consequence of \cite{GLS}, Proposition 5.9.
Here is one remark: 
In {\it loc.\ cit}, $K$ is assumed to be a finite extension of $\mbb{Q}_p$, but 
arguments in Section 4.1 and 4.2 of  {\it loc.\ cit.} proceed 
even if $K$ is not only a finite extension of $\mbb{Q}_p$ but also 
any complete discrete valuation field 
of mixed characteristic $(0,p)$ with perfect residue field. 
\end{proof}


\section{Proof of Theorem \ref{Main}}

For any integer $\alpha\ge 0$, we denote by $\mfm^{\ge \alpha}_R$
the ideal of $R$ consisting of $a\in R$ with $v_R(a)\ge \alpha$,
where $v_R$ is a valuation of $R$ such that $v_R(\underline{\pi})=\frac{1}{e}$.
Note that, if we put $\tilde{\mft}=\mft$ mod $p\in R$, then 
$v_R(\tilde{\mft})=\frac{1}{p-1}$ since $\vphi(\tilde{\mft})\in \underline{\pi}^e\tilde{\mft}\cdot R^{\times}$
(recall the equation $\vphi(\mft)=pE(0)^{-1}E(u)\mft$).

We note that we have natural inclusions $\mfM\subset \mfS\otimes_{\vphi,\mfS} \mfM\subset \whR\otimes_{\vphi,\mfS} \mfM
\subset W(R)\otimes_{\vphi,\mfS} \mfM$ for any $\mfM\in \mrm{Mod}^r_{/\mfS_{\infty}}$.
Denote by
$\mrm{Mod}^{r,\hat{G},\mrm{cris}}_{/\mfS_{\infty}}$
the full subcategory of $\mrm{Mod}^{r,\hat{G}}_{/\mfS_{\infty}}$
consisting of 
torsion $(\vphi,\hat{G})$-modules $\hat{\mfM}$ 
which satisfy the following; 
for any $x\in \mfM$,
\[
\tau(x)-x\in u^p\vphi(\mft)(W(R)\otimes_{\vphi,\mfS} \mfM).
\]

\noindent
(The subscript ``cris'' of $\mrm{Mod}^{r,\hat{G},\mrm{cris}}_{/\mfS_{\infty}}$ 
is plausible; see Theorem \ref{app} in Appendix.)
We define the full subcategory $\mrm{Rep}^{r,\hat{G},\mrm{cris}}_{\mrm{tor}}(G_K)$ of $\mrm{Rep}_{\mrm{tor}}(G_K)$ 
to be the essential image of 
the functor $\mrm{Mod}^{r,\hat{G},\mrm{cris}}_{/\mfS_{\infty}}
\subset \mrm{Mod}^{r,\hat{G}}_{/\mfS_{\infty}}\overset{\hat{T}}{\rightarrow} 
\mrm{Rep}_{\mrm{tor}}(G_K)$,
where $\hat{T}$ is defined in the previous section.
By Theorem \ref{Th3}, 
we have 
\[
\mrm{Rep}^r_{\mrm{tor}}(G_K)\subset \mrm{Rep}^{r,\hat{G},\mrm{cris}}_{\mrm{tor}}(G_K)
\]
and thus it follows Theorem \ref{Main}
from the following result.

\begin{theorem}
\label{Main'}
Suppose  $er<p-1$ and $e(r'-1)<p-1$.
Let $T\in \mrm{Rep}^{r,\hat{G},\mrm{cris}}_{\mrm{tor}}(G_K)$ and  
$T'\in \mrm{Rep}^{r',\hat{G},\mrm{cris}}_{\mrm{tor}}(G_K)$.
Then any $G_{\infty}$-equivalent morphism $T\to T'$ is in fact $G_K$-equivalent.
\end{theorem}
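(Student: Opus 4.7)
The plan is to produce, from the $G_{\infty}$-morphism $h\colon T\to T'$, a morphism $f\colon\mfM'\to\mfM$ of the underlying torsion Kisin modules, and then to upgrade $f$ to a morphism of $(\varphi,\hat{G})$-modules. For this, fix $\hat{\mfM}\in\mrm{Mod}^{r,\hat{G},\mrm{cris}}_{/\mfS_{\infty}}$ and $\hat{\mfM}'\in\mrm{Mod}^{r',\hat{G},\mrm{cris}}_{/\mfS_{\infty}}$ realising $T$ and $T'$ via $\hat T$, and note that by Theorem~\ref{Th1}(1) we may identify $T$ with $T_\mfS(\mfM)$ and $T'$ with $T_\mfS(\mfM')$ as $G_{\infty}$-representations. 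First I would lift $h$ to a Kisin-module morphism $f\colon\mfM'\to\mfM$, passing to maximal Kisin models of heights $\le r,\le r'$ as in \cite{CL1} if direct full faithfulness of $T_\mfS$ is not available in the given height range. The $H_K$-equivariance of the base-changed map $\whR\otimes f\colon\whR\otimes_{\varphi,\mfS}\mfM'\to\whR\otimes_{\varphi,\mfS}\mfM$ is automatic from condition (4) of Definition~\ref{Liumod}, so everything reduces to showing that $f$ commutes with $\tau$.

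The core computation is to control the error
\[
\delta_\tau(m')\;:=\;\tau\bigl(1\otimes f(m')\bigr)-(1\otimes f)\bigl(\tau(1\otimes m')\bigr)\qquad\text{for }m'\in\mfM'.
\]
Writing $\delta_\tau(m')=\bigl[\tau(1\otimes f(m'))-1\otimes f(m')\bigr]-(1\otimes f)\bigl[\tau(1\otimes m')-1\otimes m'\bigr]$ and applying the cris conditions defining $\mrm{Mod}^{r,\hat{G},\mrm{cris}}_{/\mfS_{\infty}}$ and $\mrm{Mod}^{r',\hat{G},\mrm{cris}}_{/\mfS_{\infty}}$ to each bracket gives
\[
\delta_\tau(m')\ \in\ u^p\varphi(\mft)\cdot\bigl(W(R)\otimes_{\varphi,\mfS}\mfM\bigr).
\]
To improve this to $\delta_\tau\equiv 0$, I would iterate. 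Frobenius compatibility $\varphi\circ\delta_\tau=\delta_\tau\circ\varphi$ gives $\delta_\tau(\varphi^n(m'))\in u^{p^{n+1}}\varphi^{n+1}(\mft)\cdot(W(R)\otimes\mfM)$. Because $\mfM'$ has height $\le r'$, each $E(u)^{r'}m'$ is an $\mfS$-linear combination of $\varphi$-images from $\mfM'$; combining this with the semilinearity relation $\delta_\tau(sm')=\varphi(\tau(s))\delta_\tau(m')$ transfers each improved bound on $\delta_\tau(\varphi(m'))$ back to $\delta_\tau(m')$, at the cost of one factor of $\varphi(\tau(E(u)))^{r'}$. Iterating $n$ times places $\delta_\tau(m')$ inside an ideal $\mfrak{I}_n\subset W(R)$ whose reduction mod $p$ has $v_R$-valuation growing like $p^n$, provided the gain in each step is strictly positive. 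The numerical conditions $er<p-1$ and $e(r'-1)<p-1$, together with $v_R(\tilde{\mft})=1/(p-1)$, are exactly what guarantees this strict gain; the asymmetric $r'-1$ reflects the factor of $E(u)$ already present in $\varphi(\mft)=pE(0)^{-1}E(u)\mft$. In the $p$-power torsion module $W(R)\otimes\mfM$, the intersection $\bigcap_n\mfrak{I}_n\cdot(W(R)\otimes\mfM)$ is then zero.

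Once $\delta_\tau\equiv 0$, the map $f$ extends to a morphism of $(\varphi,\hat{G})$-modules $\hat\mfM'\to\hat\mfM$, and applying $\hat T$ and comparing with Theorem~\ref{Th1}(1) produces the desired $G_K$-lift of $h$. The main obstacle is the iteration: one must track $v_p$, $v_R$ and the order of vanishing along $E(u)$ in parallel, keep careful account of the $\varphi\circ\tau$-twisted semilinearity of $\delta_\tau$, and verify that each Frobenius iteration beats the loss of the factor $\varphi(\tau(E(u)))^{r'}$ — this is precisely where the hypotheses $er<p-1$ and $e(r'-1)<p-1$ intervene. A secondary subtlety is to ensure that passing to maximal Kisin models in step one preserves membership in the ``cris'' subcategories, which I expect to follow from the range-of-monodromy results of \cite{GLS} cited in Theorem~\ref{Th3}(2).
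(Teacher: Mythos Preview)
Your plan coincides with the paper's: produce a Kisin-module map via the theory of maximal models, then show it automatically respects $\hat G$ by the Frobenius-iteration argument (this is the paper's Lemma~\ref{lem2}). Two clarifications are worth making.

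First, you misplace the hypothesis $er<p-1$. The iteration that forces $\delta_\tau=0$ uses only the height of the \emph{source} $\mfM'$; the paper's version of your computation (Lemma~\ref{lem2}) requires exactly $e(r'-1)<p-1$ and nothing about $r$. The condition $er<p-1$ enters elsewhere: it guarantees, via Lemma~\ref{Lem2}, that $\mfM$ itself is already maximal. This is what makes your Step~1 work cleanly. Full faithfulness of $T_\mfS$ holds only on $\Max_{/\mfS_\infty}$, so a priori the $G_\infty$-map $h$ lifts to a map $\Max(\mfM')\to\Max(\mfM)$; because $\Max(\mfM)=\mfM$, restriction along $\mfM'\subset\Max(\mfM')$ yields the desired $f\colon\mfM'\to\mfM$ directly. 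In particular your ``secondary subtlety'' about whether maximal models remain in the cris subcategory never arises: one never needs a $\hat G$-structure on $\Max(\mfM')$, only on $\mfM'$ and $\mfM$, and those were given from the start.

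Second, the paper's iteration is cleaner than the one you sketch because it first reduces by d\'evissage to the case where the target is killed by $p$, so that $W(R)\otimes_{\vphi,\mfS}\mfN$ is free over $R$ and the $v_R$-bookkeeping is transparent; the general $p^n$-torsion case then follows from the exact sequence $0\to\mrm{Ker}(p)\to\mfN\to p\mfN\to 0$. Your $\mfrak{I}_n$-formulation in $W(R)\otimes\mfM$ would need this reduction to be made rigorous.
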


\begin{lemma}
\label{lem1}
Let $a\in W(R)\smallsetminus pW(R)$.
For any Kisin module $\mfM$,
the map 
\[
W(R)\otimes_{\vphi,\mfS}\mfM\to W(R)\otimes_{\vphi,\mfS}\mfM,\quad x\mapsto ax
\]
is injective.
\end{lemma}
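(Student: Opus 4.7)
The plan is to reduce everything to the fact that $W(R)$ is an integral domain. Indeed, $R$ is a (rank one) valuation domain of characteristic $p$, while $W(R)$ is $p$-adically complete and separated, $p$-torsion free, and $W(R)/pW(R)=R$; a standard argument shows $W(R)$ itself is a domain. Since $a\notin pW(R)$ forces $a\neq 0$ in $W(R)$, multiplication by $a$ is injective on any free $W(R)$-module. This already settles the case when $\mfM$ is a free Kisin module, because then $W(R)\otimes_{\vphi,\mfS}\mfM$ is a finite free $W(R)$-module.

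For the torsion case, I exploit the fact that $\mfM\in\mrm{Mod}^r_{/\mfS_{\infty}}$ has projective dimension $\le 1$, which, since $\mfS$ is a regular local Noetherian ring, yields a resolution $0\to\mfS^n\xrightarrow{A}\mfS^m\to\mfM\to 0$ by finite free $\mfS$-modules. As $\mfM$ is $p$-power torsion, inverting $p$ forces $n=m$, and injectivity of $A$ then gives $\det A\ne 0$ in $\mfS$. Applying $W(R)\otimes_{\vphi,\mfS}-$ produces a map $\vphi(A)\colon W(R)^n\to W(R)^n$ whose determinant is $\vphi(\det A)$, nonzero in the domain $W(R)$ (since $\mfS\hookrightarrow W(R)$ is injective and $\vphi$ restricts to an automorphism of $W(R)$). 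The adjugate identity then forces $\vphi(A)$ to be injective, so $W(R)\otimes_{\vphi,\mfS}\mfM\cong\mrm{coker}(\vphi(A))$.

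The snake lemma applied to
\[
0\to W(R)^n\xrightarrow{\vphi(A)} W(R)^n\to W(R)\otimes_{\vphi,\mfS}\mfM\to 0
\]
with vertical arrows given by multiplication by $a$ reduces the required injectivity of $a$ on $W(R)\otimes_{\vphi,\mfS}\mfM$ to the injectivity of $\vphi(A)$ on the quotient $(W(R)/aW(R))^n$. Choosing $N\ge 0$ with $p^N\mfM=0$, I have $p^NI_n=AC$ for some $C\in M_n(\mfS)$; applying $\vphi$ and using that both $\vphi(A)$ and $\vphi(C)$ have nonzero determinant (hence act injectively on $W(R)^n$), I deduce $\vphi(C)\vphi(A)=p^NI_n$ as well. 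Therefore $\vphi(A)y\in aW(R)^n$ forces $p^Ny\in aW(R)^n$.

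The only remaining substantive point is to show that $p$, and hence $p^N$, is a non-zero-divisor on $W(R)/aW(R)$. If $pw=az$ with $a\notin pW(R)$, reducing mod $p$ gives $\bar a\bar z=0$ in the domain $R$, so $\bar z=0$; writing $z=pz'$ and using $p$-torsion freeness of $W(R)$ we obtain $w=az'\in aW(R)$. Iterating gives the corresponding statement for $p^N$, from which $y\in aW(R)^n$. This last paragraph is the conceptual heart of the argument; the rest is bookkeeping around the free resolution of $\mfM$.
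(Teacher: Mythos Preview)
Your argument is correct, but it takes a different route from the paper's. The paper dispatches the lemma in three lines: reduce to the torsion case, then by the d\'evissage of \cite{Li1}, Proposition 2.3.2(4), reduce further to the case $p\mfM=0$; in that case $W(R)\otimes_{\vphi,\mfS}\mfM$ is a finite free $R$-module, and multiplication by $\bar a\neq 0$ is injective because $R$ is a domain. Your approach instead works directly with the two-term free resolution of $\mfM$, applies the snake lemma, and reduces the question to showing that $p$ is a non-zero-divisor on $W(R)/aW(R)$. Both arguments ultimately rest on $R$ being an integral domain; the paper's is much shorter because it outsources the structural input to the cited d\'evissage, whereas yours is self-contained at the cost of the extra bookkeeping with $\vphi(A)$, $\vphi(C)$, and the snake map. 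Your observation that $p$ is regular on $W(R)/aW(R)$ whenever $a\notin pW(R)$ is a pleasant direct substitute for the d\'evissage.
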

\begin{proof}
We may suppose that $\mfM$ is a torsion Kisin module.
By a d\'evissage argument (\cite{Li1}, Proposition 2.3.2 (4)),
we may assume $p\mfM=0$. 
In this situation, the statement is clear since 
$W(R)\otimes_{\vphi,\mfS}\mfM$ is a finite direct sum of $R$.
\end{proof}

The following is a key lemma for our proof of Theorem \ref{Main'}:

\begin{lemma}
\label{lem2}
Let $r$ and $r'$ be non-negative integers with $e(r-1)<p-1$ $($without any assumption on $r'$$)$.
Let $\hat{\mfM}$ and $\hat{\mfN}$ be objects of $\mrm{Mod}^{r,\hat{G},\mrm{cris}}_{/\mfS_{\infty}}$ 
and $\mrm{Mod}^{r',\hat{G},\mrm{cris}}_{/\mfS_{\infty}}$, respectively.
Then we have
$\mrm{Hom}(\hat{\mfM}, \hat{\mfN})=\mrm{Hom}(\mfM, \mfN)$.

In particular, if $e(r-1)<p-1$, then
the forgetful functor 
$\mrm{Mod}^{r,\hat{G},\mrm{cris}}_{/\mfS_{\infty}}\to \mrm{Mod}^r_{/\mfS_{\infty}}$
is fully faithful.
\end{lemma}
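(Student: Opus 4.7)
The forgetful functor from $(\vphi,\hat G)$-modules to $\vphi$-modules is faithful by construction, so only fullness needs checking: given a $\vphi$-module morphism $f\colon\mfM\to\mfN$, I must show that $\whR\otimes f$ commutes with the full $\hat G$-action. Since $\hat G$ is topologically generated by $H_K$ and $\tau$ (using $p>2$), and $H_K$-commutation is automatic --- $H_K$ fixes $\mfM$ and $\mfN$ pointwise in their enveloping modules by axiom $(4)$ of Definition \ref{Liumod}, while $\whR\otimes f$ is $\whR$-linear --- the problem reduces to showing that the commutator
\[
g(x) \;:=\; (\whR\otimes f)\bigl(\tau(1\otimes x)\bigr) - \tau\bigl(1\otimes f(x)\bigr)
\]
vanishes for every $x\in\mfM$.

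Applying the ``cris'' condition to both $x\in\mfM$ and $f(x)\in\mfN$, one checks immediately that $g(\mfM)\subseteq u^p\vphi(\mft)(W(R)\otimes_{\vphi,\mfS}\mfN)$. Extending $\tau$-semilinearly, $g$ commutes with $\vphi$ (since $f$ and $\tau$ both do), and the height-$\le r$ decomposition $E(u)^r x = \sum s_i\vphi(x_i)$ in $\mfM$ delivers the fundamental iteration relation
\[
\tau(E(u))^r g(x) \;=\; \sum \tau(s_i)\,\vphi\bigl(g(x_i)\bigr) \;\in\; u^{p^2}\vphi^2(\mft)\bigl(W(R)\otimes_{\vphi,\mfS}\mfN\bigr).
\]

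Next I reduce to the case $p\mfM=p\mfN=0$ by devissage along the $p$-adic filtration. In this case, the projective-dimension-$1$ hypothesis forces $\mfM$ and $\mfN$ to be free $k[\![u]\!]$-modules, so $W(R)\otimes_{\vphi,\mfS}\mfN \cong R\otimes_{\vphi,k[\![u]\!]}\mfN$ becomes a finite free $R$-module. Since $R$ is a domain with its canonical valuation $v_R$, multiplication by any nonzero element of $R$ is injective on $R\otimes\mfN$. Setting $V_n := \inf_{x\in\mfM,\,i} v_R(g(x)_i)$, the initial inclusion gives $V_0 \ge p/e + p/(p-1)$. Using that $\vphi$ scales $v_R$ by $p$ and that $v_R(\tau(E(u))^r \bmod p) = r$ (because $\tau(E(u))\equiv (\underline{\e}\,\underline{\pi})^e \bmod p$), the fundamental relation yields $V_{n+1}\ge pV_n - r$. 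Hence $V_n - r/(p-1) \ge p^n\bigl(V_0 - r/(p-1)\bigr)$, and the hypothesis $e(r-1)<p-1$, i.e.\ $er < p+e-1$, forces $V_0 > r/(p-1)$ via the elementary chain $er < p+e-1 < p(p-1+e)$. Therefore $V_n\to\infty$, and $\tilde\mft$-adic separatedness of the finite free $R$-module $R\otimes\mfN$ forces $g=0$.

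The principal technical obstacle will be the devissage step: I must verify either that the subquotients arising in the $p$-adic filtration of $\mfM$ and $\mfN$ inherit the ``cris'' condition together with the height-$\le r$ and projective-dimension-$1$ hypotheses (so that the mod-$p$ iteration applies inductively), or, alternatively, that the same iteration can be executed directly on $W(R)\otimes_{\vphi,\mfS}\mfN$ for an arbitrary torsion $\mfN$ using $\mft$-adic separatedness of this ambient $W(R)$-module.
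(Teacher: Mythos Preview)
Your strategy matches the paper's: reduce to checking $\tau$-commutation, use the ``cris'' condition for an initial valuation estimate, and iterate via the height-$\le r$ condition on the source $\mfM$, with convergence forced by $e(r-1)<p-1$. Two points, however, need correction.

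First, the iteration relation is miscomputed. The tensor product is $W(R)\otimes_{\vphi,\mfS}\mfM$, so $1\otimes E(u)^r x=\vphi(E(u))^r(1\otimes x)$ and $1\otimes s_i\vphi(x_i)=\vphi(s_i)\,\vphi(1\otimes x_i)$. The correct identity is therefore
\[
\tau\bigl(\vphi(E(u))\bigr)^r g(x)\;=\;\sum_i \tau\bigl(\vphi(s_i)\bigr)\,\vphi\bigl(g(x_i)\bigr),
\]
and modulo $p$ one has $\tau(\vphi(E(u)))\equiv(\underline{\e}\,\underline{\pi})^{pe}$, giving $v_R=pr$ rather than $r$. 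The recursion is thus $c(s+1)=p\,c(s)-pr$ (as in the paper), with fixed point $pr/(p-1)$; the hypothesis $e(r-1)<p-1$ is precisely what makes $c(0)=p/e+p/(p-1)$ exceed this fixed point. Your easier inequality $V_0>r/(p-1)$ came from the missing Frobenius twist.

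Second, the d\'evissage should be carried out on $\mfN$ alone, not on $\mfM$. Reducing both simultaneously is awkward because $f$ need not respect two independent $p$-adic filtrations, and it is also unnecessary: the iteration uses only the height-$\le r$ relation in $\mfM$ (valid for any torsion Kisin module) and the freeness of $R\otimes_{\vphi,\mfS}\mfN$ over $R$ when $p\mfN=0$. The paper's induction is on $n$ with $p^n\mfN=0$: for $n>1$ take $0\to\mfN'=\ker(p)\to\mfN\to\mfN''=p\mfN\to 0$, verify that $\hat\mfN''\in\mrm{Mod}^{r',\hat G,\mrm{cris}}_{/\mfS_\infty}$, and apply the induction hypothesis to $\mfM\to\mfN''$ to force $\Delta(1\otimes x)$ into $\whR\otimes_{\vphi,\mfS}\mfN'$. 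One then needs Lemma~\ref{lem1} to conclude that $\Delta(1\otimes x)\in u^p\vphi(\mft)\bigl(W(R)\otimes_{\vphi,\mfS}\mfN'\bigr)$, after which the mod-$p$ iteration applies directly to the target $\mfN'$ (which is $p$-killed) while keeping $\mfM$ unchanged. The verification that $\mfN'$ and $\mfN''$ lie in $\mrm{Mod}^{r'}_{/\mfS_\infty}$ and carry compatible $(\vphi,\hat G)$-structures is not entirely formal; it uses \cite{Li1}, \cite{CL2}, and \cite{Oz} as in the paper.
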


The condition $e(r-1)<p-1$ is essential. See Remark \ref{rem2} below.

\begin{proof}
Let $f\colon \mfM\to \mfN$ be a morphism of Kisin modules and 
put $\hat{f}=W(R)\otimes f\colon W(R)\otimes_{\vphi,\mfS}\mfM\to W(R)\otimes_{\vphi,\mfS}\mfN$.
It suffices to prove that, for any $x\in \mfM$, 
$\Delta(1\otimes x)=0$ where $\Delta=\tau\circ\hat{f}-\hat{f}\circ\tau$.
We proceed by induction on $n$ such that $p^n\mfN=0$.

Suppose $n=1$, that is, $p\mfN=0$.
We may identify $W(R)\otimes_{\vphi,\mfS} \mfN$ with $R\otimes_{\vphi,\mfS} \mfN$. 
Since 
$
\Delta(1\otimes x)
=(\tau-1)(1\otimes f(x))-\hat{f}((\tau-1)(1\otimes x)),
$
we obtain the following implication
\begin{center}
$(0)$:\quad  For any $x\in \mfM$, 
$\Delta(1\otimes x)\in 
\mfm^{\ge c(0)}_R(R\otimes_{\vphi,\mfS} \mfN)$
\end{center}
where $c(0)=\frac{p}{p-1}+\frac{p}{e}$.
Note that 
\[
\Delta(1\otimes E(u)^rx)
=\tau(\vphi(E(u)))^r\Delta(1\otimes x)\\
=(\underline{\e} u)^{per}\Delta(1\otimes x)
\in R\otimes_{\vphi,\mfS} \mfN.
\]
On the other hand, since $\mfM$ is of height $\le r$,
we can write $E(u)^rx=\sum_{i\ge 0}a_i\vphi(y_i)$ for some $a_i\in \mfS$ and $y_i\in \mfM$.
Then we obtain
\begin{align*}
\Delta(1\otimes E(u)^rx)
=\sum_{i\ge 0}\tau(\vphi(a_i))\vphi(\Delta(1\otimes y_i))
\end{align*}
and it is contained in $\mfm^{\ge pc(0)}_R(R\otimes_{\vphi,\mfS} \mfN)$
by the implication $(0)$.
Since $R\otimes_{\vphi,\mfS} \mfN$ is free as an $R$-module, we obtain the implication
\begin{center}
$(1)$:\quad  For any $x\in \mfM$, 
$\Delta(1\otimes x)\in 
\mfm^{\ge c(1)}_R(R\otimes_{\vphi,\mfS} \mfN)$
\end{center}
where $c(1)=pc(0)-pr=\frac{p^2}{p-1}+\frac{p^2}{e}-pr$.
By repeating the same argument, for any $s\ge 0$,
we see the following implication
\begin{center}
$(s)$:\quad  For any $x\in \mfM$, 
$\Delta(1\otimes x)\in 
\mfm^{\ge c(s)}_R(R\otimes_{\vphi,\mfS} \mfN)$
\end{center}
where $c(s)=pc(s-1)-pr=\frac{p^{s+1}}{p-1}+\frac{p^{s+1}}{e}-p^sr-\cdots -pr$.
Since $e(r-1)<p-1$,
we know that $\mfm^{\ge c(s)}_R$
goes to zero when $s\to \infty$
and then we obtain $\Delta(1\otimes x)=0$. 

Suppose $n>1$. 
Consider the exact sequence 
$(\ast)\colon 0\to \mrm{Ker}(p)\to \mfN\overset{p}{\rightarrow} p\mfN\to 0$
of $\vphi$-modules.
By Lemma 2.3.1 and Proposition 2.3.2 of \cite{Li1},
we know that $\mfN':=\mrm{Ker}(p)$ and $\mfN'':=p\mfN$
are in $\mrm{Mod}^{r'}_{/\mfS_{\infty}}$.
Equipping $\whR\otimes_{\vphi,\mfS}\mfN''$ with $\hat{G}$-action
via the natural identification $p(\whR\otimes_{\vphi,\mfS}\mfN)=\whR\otimes_{\vphi,\mfS}\mfN''$,
we see that $\mfN''$ has a structure as a $(\vphi,\hat{G})$-module.
We can also equip $\whR\otimes_{\vphi,\mfS}\mfN'$ with $\hat{G}$-action
via the exact sequence
$0\to \whR\otimes_{\vphi,\mfS}\mfN'\to \whR\otimes_{\vphi,\mfS}\mfN\to \whR\otimes_{\vphi,\mfS}\mfN''\to 0$
(for the exactness, see \cite{CL2}, Lemma 3.1.2).
Since the sequence $0\to \whR/I_+\otimes_{\vphi,\mfS}\mfN'\to \whR/I_+\otimes_{\vphi\,\mfS}\mfN\to 
\whR/I_+\otimes_{\vphi,\mfS}\mfN''\to 0$
is also exact (\cite{Oz}, Corollary 2.11),
we know that $\mfN'$ also has a structure as a $(\vphi,\hat{G})$-module.
Summary, we obtained an exact sequence  
$0\to \hat{\mfN}'\to \hat{\mfN}\overset{p}{\rightarrow} \hat{\mfN}''\to 0$
in $\mrm{Mod}^{r',\hat{G}}_{/\mfS_{\infty}}$
whose underlying sequence of $\vphi$-modules is $(\ast)$.
Remark that $p \mfN'=0$ and $p^{n-1}\mfN''=0$.  
It is clear that $\hat{\mfN}''\in \mrm{Mod}^{r',\hat{G},\mrm{cris}}_{/\mfS_{\infty}}$.
Since $0\to \whR\otimes_{\vphi,\mfS} \mfN'\to \whR\otimes_{\vphi,\mfS} \mfN
\to \whR\otimes_{\vphi,\mfS} \mfN''\to 0$ is exact
and $p^{n-1}\mfN''=0$,
we obtain
$\Delta(1\otimes x)\in 
\whR\otimes_{\vphi,\mfS} \mfN'\subset W(R)\otimes_{\vphi,\mfS} \mfN'$
for any $x\in \mfM$ by the induction hypothesis. 
Moreover, 
we have in fact 
$\Delta(1\otimes x)\in 
u^p\vphi(\mft)(W(R)\otimes_{\vphi,\mfS} \mfN')$
since Lemma \ref{lem1} implies 
$(W(R)\otimes_{\vphi,\mfS} \mfN')\cap u^p\vphi(\mft)(W(R)\otimes_{\vphi,\mfS} \mfN)
=u^p\vphi(\mft)(W(R)\otimes_{\vphi,\mfS} \mfN')$.
Identifying 
$W(R)\otimes_{\vphi,\mfS} \mfN'$ with $R\otimes_{\vphi,\mfS} \mfN'$,
we obtain 
$\Delta(1\otimes x)\in 
\mfm^{\ge c(0)}_R(R\otimes_{\vphi,\mfS} \mfN').
$
By an analogous argument of the case where $n=1$,
we obtain the implication
\begin{center}
$(s)'$:\quad  For any $x\in \mfM$, 
$\Delta(1\otimes x)\in 
\mfm^{\ge c(s)}_R(R\otimes_{\vphi,\mfS} \mfN')$
\end{center}
for any $s\ge 0$ and this implies $\Delta(1\otimes x)=0$. 
\end{proof}

Before giving the proof of Theorem \ref{Main'},
we have to recall the theory of {\it maximal} Kisin modules.
Now we give a very rough sketch of it 
(for more precise information, see \cite{CL1}.
Our sketch here is the case where ``$r=\infty$'' in {\it loc}.\ {\it cit}.). 
For any $\mfM\in \mrm{Mod}^r_{/\mfS_{\infty}}$,
put $\mfM[1/u]=\mfS[1/u]\otimes_{\mfS} \mfM$ and 
denote by $F_{\mfS}(\mfM[1/u])$
the (partially) ordered set (by inclusion)
of torsion Kisin modules $\mfN$ of finite height which is contained in $\mfM[1/u]$ 
and $\mfN[1/u]=\mfM[1/u]$ as $\vphi$-modules.
Here, a torsion Kisin module is called {\it of finite height} if it is of  height $\le s$ 
for some integer $s\ge 0$. 
The set $F_{\mfS}(\mfM[1/u])$ has a greatest element (cf.\ {\it loc}.\ {\it cit}., Corollary 3.2.6),
which is denoted by $\Max(\mfM)$.
We say that $\mfM$ is {\it maximal}
if it is the greatest element of $F_{\mfS}(\mfM[1/u])$. 
The implication $\mfM\mapsto \Max(\mfM)$ defines a functor ``$\Max$''
from the category of torsion Kisin modules of finite height
into the category $\Max_{/\mfS_{\infty}}$ of maximal torsion Kisin modules.
Furthermore, the functor $T_{\mfS}\colon \Max_{/\mfS_{\infty}}\to \mrm{Rep}_{\mrm{tor}}(G_{\infty})$,
defined by 
$T_{\mfS}(\mfM)=\mrm{Hom}_{\mfS,\vphi}(\mfM,\mbb{Q}_p/\mbb{Z}_p\otimes_{\mbb{Z}_p} W(R))$,
is fully faithful (cf.\ {\it loc}.\ {\it cit}., Corollary 3.3.10). 
It is not difficult to check that
$T_{\mfS}(\Max(\mfM))$ is canonically isomorphic to $T_{\mfS}(\mfM)$ as representations of $G_{\infty}$
for any torsion Kisin module $\mfM$. 

\begin{lemma}
\label{Lem2}
Suppose $er<p-1$.
Then any $\mfM\in \mrm{Mod}^r_{/\mfS_{\infty}}$ is maximal.
\end{lemma}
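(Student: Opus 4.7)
The plan is to show that if $\mfN$ is any torsion Kisin module of finite height in $F_\mfS(\mfM[1/u])$ containing $\mfM$, then $\mfN=\mfM$; this makes $\mfM$ the greatest element of $F_\mfS(\mfM[1/u])$.

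First I would reduce to the mod-$p$ case by a standard dévissage: by Proposition 2.3.2 of \cite{Li1}, the submodule $\mfM[p]$ and quotient $p\mfM$ (and their analogues for $\mfN$) remain torsion Kisin modules of height $\le r$, so an induction on the $p$-length reduces the lemma to the case $p\mfM=p\mfN=0$. In this case $\mfM$ and $\mfN$ are free $k[[u]]$-modules of the same rank $d$ (they have projective dimension one over $\mfS$ and are killed by $p$, hence are $u$-torsion-free $k[[u]]$-modules; they share a rank because they agree after inverting $u$). By the elementary divisor theorem one may choose a $k[[u]]$-basis $e_1,\dots,e_d$ of $\mfN$ and integers $c_i\ge 0$ such that $u^{c_1}e_1,\dots,u^{c_d}e_d$ is a basis of $\mfM$; set $N := \sum_i c_i = \ell_{k[[u]]}(\mfN/\mfM)$, so $\mfN=\mfM$ amounts to $N=0$.

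The core computation is a determinantal identity. Writing $A$ (resp.\ $B$) for the matrix of $\vphi$ on $\mfN$ (resp.\ $\mfM$) in the above bases, the conjugation $B=C^{-1}A\vphi(C)$ with $C=\mrm{diag}(u^{c_1},\dots,u^{c_d})$ — equivalently $B_{ji}=u^{pc_i-c_j}A_{ji}$ — gives
\[
v_u(\det B) \;=\; (p-1)N + v_u(\det A).
\]
The height $\le r$ hypothesis on $\mfM$ forces each elementary divisor of $B$ to have exponent at most $er$, so $v_u(\det B)\le der$; since $A$ has entries in $k[[u]]$, $v_u(\det A)\ge 0$. Combining these, $(p-1)N\le der$, and under $er<p-1$ we obtain $N<d$. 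In particular at least one $c_i$ must vanish, for otherwise $u\mfN\subset\mfM$ would give $N\ge d$.

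To promote "some $c_i=0$" to "all $c_i=0$" I would exploit the finer Smith-normal-form data of $B$: the individual bound $\alpha_i\le er$ on each elementary divisor (not merely the bound on their sum) propagates through the entrywise relation $B_{ji}=u^{pc_i-c_j}A_{ji}$ into $u$-divisibility constraints on the entries of $A$, forcing $v_u(\det A)$ to grow with the size of the nonzero $c_i$'s; calibrated against $v_u(\det A)=v_u(\det B)-(p-1)N\le der-(p-1)N$, the Fontaine-Laffaille range $er<p-1$ is just tight enough to produce a contradiction whenever any $c_i>0$. The main obstacle is carrying out this matrix-entry bookkeeping cleanly in arbitrary rank: the crude determinantal estimate only delivers $N<d$, and closing the gap to $N=0$ either requires an induction on $d$ after peeling off a rank-one $\vphi$-stable subquotient of the étale $\vphi$-module $\mfM[1/u]$ detected by a vanishing $c_i$, or a direct Hodge-slope comparison between the Smith forms of $A$ and $B$, both of which are structural ingredients from Caruso-Liu's theory of maximal Kisin modules in \cite{CL1} specialised to the range $er<p-1$.
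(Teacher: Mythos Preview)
Your d\'evissage to the $p$-torsion case is essentially the paper's strategy: the paper also inducts on the $p$-length, using the short exact sequence $0\to pM\cap\mfM\to\mfM\to\mrm{pr}(\mfM)\to 0$ (with $M=\mfM[1/u]$) and the corresponding sequence for $\mfN$, then applies the induction hypothesis to the outer terms. Your choice of $\mfM[p]$ and $p\mfM$ is a minor variant of the same idea and works equally well.

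The genuine gap is in the base case $p\mfM=0$. Your determinantal identity $v_u(\det B)=(p-1)N+v_u(\det A)$ together with $v_u(\det B)\le der$ and $v_u(\det A)\ge 0$ only yields $(p-1)N\le der$, hence $N<d$ under $er<p-1$; it does \emph{not} give $N=0$. You recognise this and propose two routes to close the gap (peeling off a rank-one $\vphi$-stable subquotient of $\mfM[1/u]$, or a Hodge--slope comparison between the Smith forms of $A$ and $B$), but neither is actually carried out, and neither is straightforward: the \'etale $\vphi$-module $\mfM[1/u]$ over $k((u))$ has no canonical $\vphi$-stable flag to induct along, and the entrywise divisibility constraints you allude to do not obviously force $v_u(\det A)$ to grow enough to contradict $v_u(\det A)\le der-(p-1)N$ when some $c_i>0$. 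In the end you concede that finishing requires ``structural ingredients from Caruso--Liu's theory'' in \cite{CL1}.

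That is exactly what the paper does, but directly: for $n=1$ it simply invokes \cite{CL1}, Lemma~3.3.4, which already states that a $p$-torsion Kisin module of height $\le r$ with $er<p-1$ is maximal. So rather than attempting an ab initio determinantal proof of the base case and then falling back on \cite{CL1} anyway, you should cite that lemma outright; the induction step you wrote is then all that remains.
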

\begin{proof}
We prove by induction on $n$ such that $p^n\mfM=0$.
If $n=1$, then the assertion follows by Lemma 3.3.4 of \cite{CL1}.
Suppose $n>1$ and $p^n\mfM=0$.
Take any $\mfN\in F_{\mfS}(\mfM[1/u])$ such that $\mfM\subset \mfN$ and put $M=\mfM[1/u]=\mfN[1/u]$.
Denote by $\mrm{pr}$ the natural surjection $M\to M/pM$.
Putting $\mfM'=pM\cap \mfM, \mfM''=\mrm{pr}(\mfM), \mfN'=pM\cap \mfN$ and 
$\mfN''=\mrm{pr}(\mfN)$,
we see that $\mfM'$ and $\mfM''$ are objects of $\mrm{Mod}^r_{/\mfS_{\infty}}$,
and $\mfN'$ and $\mfN''$ are torsion Kisin modules of finite height.
Furthermore, we see that natural sequences 
$0\to \mfM'\to \mfM\overset{\mrm{pr}}{\rightarrow} \mfM''\to 0$ 
and  $0\to \mfN'\to \mfN\overset{\mrm{pr}}{\rightarrow} \mfN''\to 0$
of $\vphi$-modules
are exact.
By the induction hypothesis,
we know that $\mfM'$ and $\mfM''$ are maximal and thus
$\mfN'=\mfM'$ and $\mfN''=\mfM''$ (remark that $\mfM'[1/u]=\mfN'[1/u]=pM$ 
and $\mfM''[1/u]=\mfN''[1/u]=M/pM$).
This implies $\mfN=\mfM$.
\end{proof}

\begin{proof}[Proof of Theorem \ref{Main'}]
Suppose that $er<p-1$ and $e(r'-1)<p-1$.
Let $T\in \mrm{Rep}^{r,\hat{G},\mrm{cris}}_{\mrm{tor}}(G_K)$ (resp.\ $T'\in \mrm{Rep}^{r',\hat{G},\mrm{cris}}_{\mrm{tor}}(G_K)$)
and take $\hat{\mfM}\in \mrm{Mod}^{r,\hat{G},\mrm{cris}}_{/\mfS_{\infty}}$ 
(resp.\ $\hat{\mfM}'\in \mrm{Mod}^{r',\hat{G},\mrm{cris}}_{/\mfS_{\infty}}$) such that  
$T=\hat{T}(\hat{\mfM})$ (resp.\ $T'=\hat{T}(\hat{\mfM}')$).
Note that $\mfM=\Max(\mfM)$ by Lemma \ref{Lem2}.
By Theorem \ref{Th1} (1),
we have the following commutative diagram:
\begin{center}
$\displaystyle \xymatrix{
\mrm{Hom}_{G_K}(T,T')\ar@{^{(}->}[rr] & &   
\mrm{Hom}_{G_{\infty}}(T,T') \\
\mrm{Hom}(\hat{\mfM}',\hat{\mfM}) \ar^{\hat{T}}[u] \ar^{\mrm{forgetful}}[r] &
\mrm{Hom}(\mfM',\mfM) \ar^{\Max\quad \  }[r] & \mrm{Hom}(\Max(\mfM'),\mfM). \ar^{T_{\mfS}}[u]. 
}$
\end{center}
The first bottom horizontal arrow is bijective by Lemma \ref{lem2} and 
the second is also by an easy argument.
Since the right vertical arrow is bijective,
the top horizontal arrow must be bijective. 
\end{proof}
\begin{remark}
\label{rem1}
By Lemma \ref{lem2}, we can prove the latter part of Theorem \ref{Main} directly 
without using the former part of Theorem \ref{Main} as below:
Suppose that $er<p-1$.
Let $T\in \mrm{Rep}^r_{\mrm{tor}}(G_K)$ (resp.\ $T'\in \mrm{Rep}^r_{\mrm{tor}}(G_K)$)
and take $\hat{\mfM}$ (resp.\ $\hat{\mfM}'$) be as in Theorem \ref{Th3}, 
which is an object of $\mrm{Mod}^{r,\hat{G},\mrm{cris}}_{/\mfS_{\infty}}$.
By Theorem \ref{Th1} (1),
we have the commutative diagram
\begin{center}
$\displaystyle \xymatrix{
\mrm{Hom}_{G_K}(T,T')\ar@{^{(}->}[r] &  
\mrm{Hom}_{G_{\infty}}(T,T') \\
\mrm{Hom}(\hat{\mfM}',\hat{\mfM}) \ar^{\hat{T}}[u] \ar^{\mrm{forgetful}}[r] &
\mrm{Hom}(\mfM',\mfM) \ar^{T_{\mfS}}[u] 
}$
\end{center}
and then we obtain the desired result by an analogous argument before this remark.
\end{remark}

\begin{remark}
\label{rem2}
The condition $e(r-1)<p-1$ in Lemma \ref{lem2} is essential for the fullness 
of the forgetful functor 
$\mrm{Mod}^{r,\hat{G},\mrm{cris}}_{/\mfS_{\infty}}\to \mrm{Mod}^r_{/\mfS_{\infty}}$
(note that this functor is always faithful).
In fact, we have an example which implies that
this forgetful functor is not full even if $e(r-1)=p-1$.
In the below, 
we show that the forgetful functor 
$\mrm{Mod}^{r,\hat{G},\mrm{cris}}_{/\mfS_{\infty}}\to \mrm{Mod}^r_{/\mfS_{\infty}}$ 
is not full when $K=\mbb{Q}_p$ and $r=p$.

Suppose $K=\mbb{Q}_p$. Let $E_{\pi}$ be the Tate curve over $\mbb{Q}_p$ associated with $\pi$.
Lemma \ref{Prop3lem} in the next section says that
the $2$-dimensional $\mbb{F}_p$-representation $E_{\pi}[p]$ of $G_{\mbb{Q}_p}$
is torsion crystalline with Hodge-Tate weights in $[0,p]$.
In particular, by Theorem \ref{Th3},
there exists a $(\vphi,\hat{G})$-module  $\hat{\mfM}\in \mrm{Mod}^{p,\hat{G},\mrm{cris}}_{/\mfS_{\infty}}$
such that  $\hat{T}(\hat{\mfM})\simeq E_{\pi}[p]$.
On the other hand, for any non-negative integer $\ell$,
define the $(\vphi,\hat{G})$-module $\hat{\mfS}_1(\ell)=(\mfS_1(\ell),\vphi, \hat{G})$ as below:
$\mfS_1(\ell)=k[\![u]\!]\cdot \mfrak{f}^{\ell}$ is the rank-1 free 
$k[\![u]\!]$-module equipped with the Frobenius 
$\vphi(\mfrak{f}^{\ell})=c_0^{-\ell}u^{e\ell}\cdot \mfrak{f}^{\ell}$,
and define a $\hat{G}$-action on $\whR\otimes_{\vphi, \mfS} \mfS_1(\ell)$
by $\tau(\mfrak{f}^{\ell})=\hat{c}^{\ell}\cdot \mfrak{f}^{\ell}$.
Here, 
$\hat{c}=\prod^{\infty}_{n=1}\vphi^n(\frac{E(u)}{\tau(E(u))})$, which  
is contained in $ \whR^{\times}$ (cf.\ Example 3.2.3 of \cite{Li3}).
Then Example 3.2.3 of {\it loc}. {\it cit}. says that 
$\hat{T}(\hat{\mfS}_1(\ell))\simeq \mbb{F}_p(\ell)$.
On the other hand,
we define the $(\vphi,\hat{G})$-module 
$\hat{\mfS}_1(\ell)_0=(\mfS_1(\ell)_0,\vphi, \hat{G})$ as below:
Put $\ell_0=\mrm{max}\{\ell'\in \mbb{Z}_{\ge 0};e\ell-(p-1)\ell'\ge 0 \}$.
We denote by $\mfS_1(\ell)_0=k[\![u]\!]\cdot \mfrak{g}^{\ell}$ the rank-1 free 
$k[\![u]\!]$-module equipped with the Frobenius 
$\vphi(\mfrak{f}^{\ell})=c_0^{-\ell}u^{e\ell-(p-1)\ell_0}\cdot \mfrak{g}^{\ell}$,
and define a $\hat{G}$-action on $\whR\otimes_{\vphi, \mfS} \mfS_1(\ell)$
by $\tau(\mfrak{g}^{\ell})=\underline{\e}^{-p\ell_0}\hat{c}^{\ell}\cdot \mfrak{g}^{\ell}$.
(The generator $\mfrak{g}^{\ell}$ is taken to behave as $u^{-\ell_0}\mfrak{f}^{\ell}$.)
Then we see that 
$\Max(\mfS_1(\ell))=\mfS_1(\ell)_0$, and 
$\hat{\mfS}_1(\ell)_0$ (and $\hat{\mfS}_1(\ell)$)
are objects of $\mrm{Mod}^{\ell,\hat{G},\mrm{cris}}_{/\mfS_{\infty}}$.
We also see $\hat{T}(\hat{\mfS}_1(\ell)_0)\simeq \hat{T}(\hat{\mfS}_1(\ell))\simeq \mbb{F}_p(\ell)$.
Now we consider the following commutative diagram
(here we remark that $\mfS_1(0)\oplus \mfS_1(1)_0$ is maximal):
\begin{center}
$\displaystyle \xymatrix{
\mrm{Hom}_{G_{\mbb{Q}_p}}(\mbb{F}_p\oplus \mbb{F}_p(1),E_{\pi}[p])\ar@{^{(}->}[rr] & &   
\mrm{Hom}_{G_{\infty}}(\mbb{F}_p\oplus \mbb{F}_p(1),E_{\pi}[p]) \\
\mrm{Hom}(\hat{\mfM}, \hat{\mfS}_1(0)\oplus \hat{\mfS}_1(1)_0) 
\ar^{\hat{T}}[u] \ar^{\mrm{forgetful}}[r] &
\mrm{Hom}(\mfM, \mfS_1(0)\oplus \mfS_1(1)_0) \ar^{\Max\quad \  }[r] & 
\mrm{Hom}(\Max(\mfM), \mfS_1(0)\oplus \mfS_1(1)_0). \ar^{T_{\mfS}}[u]. 
}$
\end{center}
\noindent
The second bottom horizontal arrow and the right vertical arrow are 
bijective since $\mfS_1(0)\oplus \mfS_1(1)_0$ is maximal.
On the other hand, it is well-known that the inclusion
$\mrm{Hom}_{G_{\mbb{Q}_p}}(\mbb{F}_p\oplus \mbb{F}_p(1),E_{\pi}[p])
\subset \mrm{Hom}_{G_{\infty}}(\mbb{F}_p\oplus \mbb{F}_p(1),E_{\pi}[p])$
is not equal.
Therefore, the first bottom horizontal arrow is not surjective.
This implies that the forgetful functor 
$\mrm{Mod}^{p,\hat{G},\mrm{cris}}_{/\mfS_{\infty}}\to \mrm{Mod}^p_{/\mfS_{\infty}}$ 
is not full.
\end{remark}

\begin{remark}
Combining Theorem (2.3.5) of \cite{Kis}, Theorem \ref{Th3} and Lemma \ref{lem2},
we see that 
the forgetful functor 
$\mrm{Mod}^{1,\hat{G},\mrm{cris}}_{/\mfS_{\infty}}\to \mrm{Mod}^1_{/\mfS_{\infty}}$
is an equivalence of categories.
\end{remark}

\section{Non-fullness: Examples}

In the previous section,
we showed that the restriction functor 
$\mrm{Rep}^r_{\mrm{tor}}(G_K)\overset{\mrm{res}}{\longrightarrow} \mrm{Rep}_{\mrm{tor}}(G_{\infty})$
is fully faithful under the condition that $er<p-1$.
However, the full faithfulness may not hold 
if $er\ge p-1$. In this section, we give some examples of this phenomenon. 
It should be noted that all our examples appearing in this section 
are given under the condition  $e(r-1)\ge p-1$.

Let $\mu_{p^{n}}$ be the set of $p^n$-th roots of unity in $\overline{K}$,
$\mu_{p^{\infty}}:=\bigcup_{n\ge 0}\mu_{p^{n}}$
and denote by $G_1\subset G_K$ the absolute Galois group of $K(\pi_1)$.
Remark that,
if the restriction functor $\mcal{C}\to \mrm{Rep}_{\mrm{tor}}(G_1)$ is not fully faithful
for a full subcategory $\mcal{C}$ of $\mrm{Rep}_{\mrm{tor}}(G_K)$,
then the restriction functor $\mcal{C}\to \mrm{Rep}_{\mrm{tor}}(G_{\infty})$ is not fully faithful.  
Furthermore, we also remark that restriction functors 
$\mcal{C}\to \mrm{Rep}_{\mrm{tor}}(G_{\infty})$ and 
$\mcal{C}\to \mrm{Rep}_{\mrm{tor}}(G_1)$
are always faithful.

\begin{proposition}
\label{Prop}
Let $K$ be a finite extension of $\mbb{Q}_p$.
Let $s$ be the largest integer $n$ such that 
$\mu_{p^n}\subset K$.
Suppose that $s\ge 1$ and 
$K(\mu_{p^{s+1}})/K$ is ramified. 
Then the functor from torsion crystalline $\mbb{Z}_p$-representations of $G_K$
with Hodge-Tate weights in $[0,p+1]$ to torsion $\mbb{Z}_p$-representations of $G_1$,
obtained by restricting the action of $G_K$ to $G_1$, is not full.
\end{proposition}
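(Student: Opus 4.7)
The plan is to take $T = T' = E_\pi[p]$, the $p$-torsion of the Tate elliptic curve over $K$ with parameter $\pi$, and to exhibit a $G_1$-equivariant endomorphism of $T$ that is not $G_K$-equivariant. As a $G_K$-module, $E_\pi[p]$ sits in the connected--\'etale sequence
\[
0 \to \mu_p \to E_\pi[p] \to \mbb{Z}/p\mbb{Z} \to 0,
\]
whose class in $H^1(G_K,\mu_p) \cong K^\times/(K^\times)^p$ is the Kummer class of the uniformizer $\pi$, nonzero because $\pi \notin (K^\times)^p$.

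First I would use the hypothesis $s \ge 1$, which forces $\mu_p \subset K$, to identify $E_\pi[p]$ with a non-split extension $0 \to \mbb{F}_p \to E_\pi[p] \to \mbb{F}_p \to 0$ of trivial $G_K$-modules, with unipotent action $\sigma \mapsto \bigl(\begin{smallmatrix} 1 & c(\sigma) \\ 0 & 1\end{smallmatrix}\bigr)$ where $c$ is the nontrivial Kummer cocycle. A direct computation of the $2\times 2$ matrices commuting with all $\bigl(\begin{smallmatrix} 1 & c(\sigma) \\ 0 & 1\end{smallmatrix}\bigr)$ yields $\mrm{End}_{G_K}(E_\pi[p]) \cong \mbb{F}_p[N]/(N^2)$, of dimension $2$ over $\mbb{F}_p$. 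Restricting to $G_1 = \mrm{Gal}(\overline{K}/K(\pi_1))$, the cocycle $c(\sigma) = \sigma(\pi_1)/\pi_1$ vanishes identically, so $E_\pi[p]|_{G_1}$ splits as $\mbb{F}_p \oplus \mbb{F}_p$ and $\mrm{End}_{G_1}(E_\pi[p]) = M_2(\mbb{F}_p)$ is $4$-dimensional. Comparing dimensions, the restriction map on endomorphisms is not surjective, which produces the desired $G_1$-equivariant morphism that fails to be $G_K$-equivariant.

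The main obstacle I foresee --- and where I expect the ramification hypothesis on $K(\mu_{p^{s+1}})/K$ really to enter --- is the verification that $E_\pi[p]$ lies in $\mrm{Rep}^{p+1}_{\mrm{tor}}(G_K)$ in the sense of the paper. For $K = \mbb{Q}_p$ this will be the forthcoming Lemma \ref{Prop3lem}, which realizes $E_\pi[p]$ as a quotient $L/L'$ of two $G_K$-stable $\mbb{Z}_p$-lattices inside a suitable crystalline $\mbb{Q}_p$-representation (in fact with Hodge-Tate weights already in $[0,p]$). For general $K$ satisfying the stated hypotheses one expects an analogous construction obtained by an appropriate twist of the Tate module of $E_\pi$; the ramification condition on $\mu_{p^{s+1}}$ should control the depth of the twist needed to land inside the crystalline category and ultimately bound the resulting Hodge-Tate weights by $p+1$.
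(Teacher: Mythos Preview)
Your reduction is correct: once you know that $E_\pi[p]$ lies in $\mrm{Rep}^{p+1}_{\mrm{tor}}(G_K)$, the comparison of $\mrm{End}_{G_K}$ and $\mrm{End}_{G_1}$ immediately gives non-fullness. In fact the paper's argument produces exactly the same witness: the representation $\rho\bmod p$ it constructs is the nontrivial extension of $\mbb{F}_p$ by $\mbb{F}_p$ with class $f_0$, which is the Kummer class of $\pi$, i.e.\ $\rho\bmod p\simeq E_\pi[p]$.

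The gap is in how you propose to fill the only nontrivial step. Twisting the Tate module $V_p(E_\pi)$ cannot work: $V_p(E_\pi)$ is semistable with \emph{nonzero} monodromy $N$, and a character twist does not change $N$, so no twist of $V_p(E_\pi)$ is crystalline. Also, your appeal to Lemma~\ref{Prop3lem} for $K=\mbb{Q}_p$ is misplaced here: Proposition~\ref{Prop} requires $s\ge 1$, i.e.\ $\mu_p\subset K$, which excludes $K=\mbb{Q}_p$; the paper handles $\mbb{Q}_p$ separately in Proposition~\ref{Prop3}, by a completely different (global, modularity-based) method.

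What the paper actually does to place $E_\pi[p]$ in $\mrm{Rep}^{p+1}_{\mrm{tor}}(G_K)$ is a local cohomological lifting argument. One seeks an unramified character $\chi$ and an integer $2\le \ell\le p+1$ such that the class $f_0\in H^1(G_K,\mbb{F}_p)$ lifts to $H^1(G_K,\mbb{Z}_p(\chi\e^{\ell}))$; since $\ell\ge 2$, any such $\mbb{Z}_p$-extension of $\mbb{Z}_p$ by $\mbb{Z}_p(\chi\e^{\ell})$ is automatically crystalline with Hodge--Tate weights $\{0,\ell\}\subset[0,p+1]$, and its reduction is $E_\pi[p]$. By Tate local duality the obstruction to lifting $f_0$ is that the image of the connecting map $\delta^0_{\chi,\ell}\colon H^0(G_K,\mbb{Q}_p/\mbb{Z}_p(\chi^{-1}\e^{1-\ell}))\to H^1(G_K,\mbb{F}_p)$ must lie in the annihilator $H$ of $f_0$. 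Lemma~\ref{lem3} computes this image explicitly, and the ramification hypothesis on $K(\mu_{p^{s+1}})/K$ is exactly what allows one to adjust $\chi$ and $\ell$ (within $2\le\ell\le p+1$) so that the image lands in $H$. This is where the real work is, and your proposal does not address it.
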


The lemma below follows from direct calculations.
\begin{lemma}
\label{lem3}
Let $s\ge 1$ be an integer and $\psi\colon G_K\to \mbb{Z}^{\times}_p$ an unramified character
with the property that $s$ is the largest integer $n$ such that $\psi$ mod $p^n$ is trivial.
Define $\beta_{\psi}\colon G_K\to \mbb{Z}_p$ by the relation 
$\psi=1+p^s\beta_{\psi}$
and put $\bar{\beta}_{\psi}=\beta$ mod $p$.
Denote by $\delta^0_{\psi}\colon H^0(G_K,\mbb{Q}_p/\mbb{Z}_p(\psi))\to H^1(G_K,\mbb{F}_p)$
the connection map coming from the exact sequence $0\to \mbb{F}_p\to \mbb{Q}_p/\mbb{Z}_p(\psi)
\overset{p}{\rightarrow}\mbb{Q}_p/\mbb{Z}_p(\psi)\to 0$
of $G_K$-modules.
Then $\bar{\beta}_{\psi}\in H^1(G_K,\mbb{F}_p)$ and 
$\mrm{Im}(\delta^0_{\psi})=\mbb{F}_p.\bar{\beta}_{\psi}$.
\end{lemma}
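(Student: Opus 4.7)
The plan is to verify the two assertions separately: first that $\bar{\beta}_\psi$ represents an element of $H^1(G_K, \mathbb{F}_p)$, and second that the image of $\delta^0_\psi$ is exactly the line through $\bar{\beta}_\psi$. Throughout I would exploit the fact that since $\psi \equiv 1 \bmod p^s$ with $s\ge 1$, the induced action of $G_K$ on the subgroup $\mathbb{F}_p \subset \mathbb{Q}_p/\mathbb{Z}_p(\psi)$ is trivial, so cocycles on $\mathbb{F}_p$ are simply homomorphisms.

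For the first assertion, I would start from multiplicativity $\psi(gh)=\psi(g)\psi(h)$, substitute $\psi = 1 + p^s \beta_\psi$, expand, and cancel to obtain
\[
\beta_\psi(gh) = \beta_\psi(g) + \beta_\psi(h) + p^s \beta_\psi(g)\beta_\psi(h).
\]
Reducing modulo $p$ (using $s\ge 1$) yields $\bar{\beta}_\psi(gh) = \bar{\beta}_\psi(g)+\bar{\beta}_\psi(h)$, so $\bar{\beta}_\psi$ is a homomorphism $G_K\to \mathbb{F}_p$, hence a class in $H^1(G_K,\mathbb{F}_p)$.

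For the second assertion, I would first determine $H^0(G_K,\mathbb{Q}_p/\mathbb{Z}_p(\psi))$. The maximality of $s$ guarantees some $g_0\in G_K$ with $\beta_\psi(g_0)\in \mathbb{Z}_p^\times$. An element $a/p^n\in \mathbb{Q}_p/\mathbb{Z}_p$ is $G_K$-invariant precisely when $p^s\beta_\psi(g)\cdot(a/p^n)=0$ for all $g$; evaluating at $g_0$ forces $p^{n-s}\mid a$ when $n>s$, so
\[
H^0(G_K,\mathbb{Q}_p/\mathbb{Z}_p(\psi)) = \tfrac{1}{p^s}\mathbb{Z}_p/\mathbb{Z}_p,
\]
a cyclic group of order $p^s$ generated by $1/p^s$.

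Finally I would compute $\delta^0_\psi$ on this generator: the element $1/p^{s+1}\in \mathbb{Q}_p/\mathbb{Z}_p(\psi)$ is a lift of $1/p^s$ under multiplication by $p$, and for each $g\in G_K$,
\[
g\cdot (1/p^{s+1}) - 1/p^{s+1} = (\psi(g)-1)\cdot 1/p^{s+1} = \beta_\psi(g)/p,
\]
which corresponds to $\bar{\beta}_\psi(g)\in \mathbb{F}_p$ under the identification $\mathbb{F}_p\hookrightarrow \mathbb{Q}_p/\mathbb{Z}_p$, $a\mapsto a/p$. Thus $\delta^0_\psi(1/p^s)=\bar{\beta}_\psi$, and $\mathbb{F}_p$-linearity yields $\mathrm{Im}(\delta^0_\psi)=\mathbb{F}_p\cdot \bar{\beta}_\psi$. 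There is no real obstacle here; the entire argument is bookkeeping with the definitions, and the only subtlety is keeping the identification $\mathbb{F}_p\subset \mathbb{Q}_p/\mathbb{Z}_p$ and the choice of $p$-th root lift consistent throughout the computation of the connecting map.
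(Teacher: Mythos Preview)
Your proof is correct and is precisely the ``direct calculation'' the paper alludes to (the paper gives no further proof beyond that phrase). The only minor imprecision is the phrase ``$\mathbb{F}_p$-linearity'' at the end: $\delta^0_\psi$ is a priori only additive, but since $H^0(G_K,\mathbb{Q}_p/\mathbb{Z}_p(\psi))$ is cyclic generated by $1/p^s$ and the target is an $\mathbb{F}_p$-vector space, the image is the subgroup generated by $\bar\beta_\psi$, which equals $\mathbb{F}_p\cdot\bar\beta_\psi$; so the conclusion stands.
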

\begin{proof}[Proof of Proposition \ref{Prop}]
Let $\e\colon G_K\to \mbb{Z}^{\times}_p$ be the $p$-adic cyclotomic character and $\bar{\e}:=\e$ mod $p$
the mod $p$ cyclotomic character. 
Let $K$ and $s\ge 1$ be as in Proposition \ref{Prop}.  
Let $\chi\colon G_K\to \mbb{Z}^{\times}_p$ be an unramified character 
such that $\chi$ mod $p^s$ is trivial. 
It suffices to show that, for some choice of $\chi$, there exist $\rho\colon G_K\to GL_2(\mbb{Z}_p)$
and $2\le r\le p+1$ with an exact sequence  
$0\to \chi\e^r\to \rho\to 1\to 0$ of representations of $G_K$ such that 
$\rho$ mod $p$ is not trivial on $G_K$ but is trivial on $G_1$.
Here, $1$ in the above exact sequence means the trivial character. 
Since $\mu_p\subset K$, 
we can define $f_0\in H^1(G_K,\mbb{F}_p)$ such that 
$f_0$ factors through $\hat{G},\ f_0(\tau)=1$ and $f_0|_{H_K}=0$,
where $H_K$ is defined in Definition \ref{Liumod}.
The kernel of the restriction map $H^1(G_K,\mbb{F}_p)\to H^1(G_1,\mbb{F}_p)$
is a one dimensional $\mbb{F}_p$-vector space which is generated by $f_0$.
Let $H\subset H^1(G_K,\mbb{F}_p)$ be an annihilator of $f_0$ under the Tate paring.
For any integer $\ell$, 
denote by $\delta^1_{\chi,\ell}\colon H^1(G_K,\mbb{F}_p)\to H^2(G_K,\mbb{Z}_p(\chi\e^{\ell}))$
(resp.\ $\delta^0_{\chi,\ell}\colon H^1(G_K,\mbb{Q}_p/\mbb{Z}_p(\chi^{-1}\e^{1-\ell}))
\to H^1(G_K,\mbb{F}_p)$)
the connection map coming from the exact sequence 
$0\to \mbb{Z}_p(\chi\e^{\ell})\overset{p}{\rightarrow} \mbb{Z}_p(\chi\e^{\ell})\to \mbb{F}_p\to 0$
(resp.\ $0\to \mbb{F}_p\to\mbb{Q}_p/\mbb{Z}_p(\chi^{-1}\e^{1-\ell})
\overset{p}{\rightarrow} \mbb{Q}_p/\mbb{Z}_p(\chi^{-1}\e^{1-\ell})\to 0$)
of $G_K$-modules.
By Tate local duality,
the condition that $f_0$ lifts to $H^1(G_K,\mbb{Z}_p(\chi\e^{\ell}))$ 
is equivalent to the condition that $\mrm{Im}(\delta^0_{\chi,\ell})\subset H$.
Hence it is enough to choose $\chi$ which satisfies the latter condition
for some $2\le \ell\le p+1$.

Since $K(\mu_{p^{s+1}})/K$ is ramified, we know that
$s$ is the largest integer $n$ such that $\chi^{-1}\e^{-1}$ mod $p^n$ is trivial.
Take $\beta_{\chi^{-1}\e^{-1}}$ and $\bar{\beta}_{\chi^{-1}\e^{-1}}$
as in Lemma \ref{lem3}. For simplicity, we write 
$\alpha_{\chi}:=\beta_{\chi^{-1}\e^{-1}}$ and 
$\bar{\alpha}_{\chi}:=\bar{\beta}_{\chi^{-1}\e^{-1}}$.
By Lemma \ref{lem3}, $\mrm{Im}(\delta^0_{\chi,2})$ is generated by $\bar{\alpha}_{\chi}$.
If $\bar{\alpha}_1$ is contained in $H$, then we finish the proof 
(choose $\chi$ as the trivial character $1$). 
Suppose $\bar{\alpha}_1$ is not contained in $H$.
From now on, we fix $\chi$ as follows; $\chi$ is the unramified character $G_K\to \mbb{Z}^{\times}_p$
with $\chi(\mrm{Frob}_K)=(1+p^s)^{-1}$, where $\mrm{Frob}_K$ is the arithmetic Frobenius of $K$. 
Let $u_1\colon G_K\to \mbb{F}_p$ be the unramified homomorphism with $u_1(\mrm{Frob}_K)=1$.
Then we obtain $\bar{\alpha}_{\chi}=u_1+\bar{\alpha}_1$. 
Since $K(\mu_{p^{s+1}})/K$ is ramified, we see that 
$\bar{\alpha}_1|_{I_K}$ is not zero where $I_K$ is the inertia subgroup of $G_K$.
This implies $u_1\notin \mbb{F}_p.\bar{\alpha}_1$.
Noting that $H^1(G_K,\mbb{F}_p)=H\oplus \mbb{F}_p.\bar{\alpha}_1$,
we have $\bar{\alpha}_{\chi}+\bar{a}\bar{\alpha}_1\in H$ for some $\bar{a}\in \mbb{F}_p$.
Let $0\le a\le p-1$ be the integer such that $a$ mod $p$ is $\bar{a}$.
Under the modulo $p^{2s}$, we have 
$\chi^{-1}\e^{-(1+a)}=\chi^{-1}\e^{-1}\cdot \e^{-a}=
(1+p^s\alpha_{\chi})(1+p^sa\alpha_1)=1+p^s(\alpha_{\chi}+a\alpha_1)$.
Since $\bar{\alpha}_{\chi}+\bar{a}\bar{\alpha}_1=u_1+(\bar{a}+1)\bar{\alpha}_1\not =0$,
we see  that
$s$ is the largest integer $n$ such that $\chi^{-1}\e^{-(1+a)}$ mod $p^n$ is trivial.
Hence, defining  $\beta_{\chi^{-1}\e^{-(1+a)}}$ as in Lemma \ref{lem3},
we obtain $\bar{\beta}_{\chi^{-1}\e^{-(1+a)}}=\bar{\alpha}_{\chi}+\bar{a}\bar{\alpha}_1$.
Therefore,
we obtain that $\mrm{Im}(\delta^0_{\chi,2+a})=\mbb{F}_p.\bar{\beta}_{\chi^{-1}\e^{-(1+a)}}\subset H$ and we have done.
\end{proof}

Unfortunately,
Proposition \ref{Prop} can not be applied even if $K=\mbb{Q}_p$.
On the other hand, the following proposition is effective for $K=\mbb{Q}_p$,
but we need a certain restriction on the choice of the uniformizer $\pi$.  
Let $L$ be the unique degree $p$ extension of $K$ which is contained in $K(\mu_{p^{\infty}})$.

\begin{proposition}
\label{Prop2}
Let $K$ be a finite extension of $\mbb{Q}_p$. 
Suppose that $\pi$ is contained in $\mrm{Norm}_{L/K}(L^{\times})$.
$($Thus the extension $L/K$ must be totally ramified in this case.$)$
Then the functor from torsion crystalline $\mbb{Z}_p$-representations of $G_K$
with Hodge-Tate weights in $[0,p]$ to torsion $\mbb{Z}_p$-representations of $G_1$,
obtained by restricting the action of $G_K$ to $G_1$, is not full.
\end{proposition}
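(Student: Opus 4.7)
The plan is to build a crystalline $\mbb{Z}_p$-lattice $\rho$ of $G_K$ fitting in a nonsplit extension $0\to \mbb{Z}_p(\e^p)\to \rho\to \mbb{Z}_p\to 0$ with Hodge-Tate weights in $\{0,p\}$, such that its mod-$p$ reduction $T=\rho/p\rho$ remains nonsplit as a $G_K$-representation but splits as a $G_1$-representation. Once such a $\rho$ is constructed, the $G_1$-equivariant splitting of $T|_{G_1}$ yields a morphism $T\to \mu_p$ (or $T\to \mbb{F}_p$) that is $G_1$-equivariant but not $G_K$-equivariant. Both $T$ (as a quotient of lattices in a crystalline representation with Hodge-Tate weights in $[0,p]$) and $\mu_p$ lie in $\mrm{Rep}^p_{\mrm{tor}}(G_K)$, producing the desired failure of fullness.

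The mod-$p$ extension class is furnished by Kummer theory: $[\pi]\in K^{\times}/(K^{\times})^p = H^1(G_K,\mu_p)$ is nonzero because $v_K(\pi)=1$ is coprime to $p$, and trivial on $G_1$ because $\pi=\pi_1^p\in(K(\pi_1)^{\times})^p$. Using $\mu_p=\mbb{F}_p(\bar\e^p)$ (since $\bar\e^{p-1}=1$), the construction of $\rho$ amounts to lifting $[\pi]$ along the mod-$p$ reduction $H^1(G_K,\mbb{Z}_p(\e^p))\to H^1(G_K,\mu_p)$ attached to $0\to\mbb{Z}_p(\e^p)\xrightarrow{p}\mbb{Z}_p(\e^p)\to \mu_p\to 0$. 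Crystallinity of the lift is automatic: for $V=\mbb{Q}_p(\e^p)$ with Hodge-Tate weight $p\ge 2$, Bloch-Kato combined with Tate's Euler characteristic formula gives $\dim_{\mbb{Q}_p} H^1_f(G_K,V)=[K:\mbb{Q}_p]=\dim_{\mbb{Q}_p} H^1(G_K,V)$, so every extension is crystalline and every $\mbb{Z}_p$-lattice in it is a crystalline lattice. The obstruction to the lift lies in $H^2(G_K,\mbb{Z}_p(\e^p))$, and by local Tate duality applied to the Pontryagin dual sequence $0\to \mbb{F}_p\to (\mbb{Q}_p/\mbb{Z}_p)(\e^{1-p})\xrightarrow{p}(\mbb{Q}_p/\mbb{Z}_p)(\e^{1-p})\to 0$, it vanishes exactly when $[\pi]$ pairs to zero, under the Tate pairing $H^1(G_K,\mu_p)\times H^1(G_K,\mbb{F}_p)\to \mbb{F}_p$, with the image of the connecting map $\delta^0\colon H^0(G_K,(\mbb{Q}_p/\mbb{Z}_p)(\e^{1-p}))\to H^1(G_K,\mbb{F}_p)$.

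The heart of the argument is identifying this image as $\mrm{Im}(\delta^0)=\mbb{F}_p\cdot \psi_L$, where $\psi_L\colon G_K\to \mbb{F}_p$ is the character cutting out $L$. Writing $\e^{1-p}=1+p^N\beta_0$ with $N\ge 1$ maximal and $\bar\beta_0:=\beta_0\bmod p$, an explicit lift-and-coboundary computation (parallel to Lemma \ref{lem3}) gives $\mrm{Im}(\delta^0)=\mbb{F}_p\cdot \bar\beta_0$; and $\bar\beta_0$ is a nonzero character of $G_K$ factoring through $\mrm{Gal}(K(\mu_{p^\infty})/K)$, so by uniqueness of the degree-$p$ subextension of $K(\mu_{p^\infty})/K$ it cuts out $L$, forcing $\bar\beta_0=\psi_L$. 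By local class field theory, the Tate pairing $[\pi]\cdot \psi_L$ equals the norm residue symbol $\psi_L(\mrm{rec}_K(\pi))$, which vanishes if and only if $\pi\in \mrm{Norm}_{L/K}(L^{\times})$, exactly our hypothesis. The main obstacle is precisely the identification $\bar\beta_0=\psi_L$: it rests on a delicate $p$-adic expansion of $\e^{1-p}$ that takes slightly different forms in the cases $\mu_p\subset K$ (where one uses $\e^{1-p}\equiv \e\bmod p^{t+1}$ with $t$ maximal so that $\mu_{p^t}\subset K$) and $\mu_p\not\subset K$ (where one first factors off the tame part $\omega$ of $\e$ and works with the wild part $\langle\e\rangle$).
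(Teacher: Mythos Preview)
Your proof is correct and follows essentially the same approach as the paper's: lift the Kummer class $[\pi]\in H^1(G_K,\mu_p)$ to $H^1(G_K,\mbb{Z}_p(\e^p))$ by showing, via Tate duality and local class field theory, that $[\pi]$ pairs trivially with $\mrm{Im}(\delta^0)=\mbb{F}_p\cdot\psi_L$, then use that the resulting $\rho$ reduces mod $p$ to a $G_K$-nonsplit but $G_1$-split extension. You are in fact slightly more careful than the paper in two places --- you allow the maximal $N$ with $\e^{1-p}\equiv 1\pmod{p^N}$ to exceed $1$ (the paper writes $\e^{1-p}=1+p\psi$ and asserts $\bar\psi\ne 0$, which fails for instance when $K=\mbb{Q}_p(\mu_{p^2})$), and you explicitly justify crystallinity of the lift via $H^1_f=H^1$ for weight $p\ge 2$, which the paper leaves implicit.
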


\begin{proof}
Let $s$ be the largest integer $n$ such that $\mu_{p^n}\subset K$.
Then we can write $\e^{1-p}=1+p\psi$ with some map $\psi\colon G_K\to \mbb{Z}_p$. 
Putting $\bar{\psi}=\psi$ mod $p\colon G_K\to \mbb{F}_p$, 
we see that $\bar{\psi}$ is non-trivial homomorphism with kernel $\mrm{Gal}(\overline{K}/L)$.
Let $\delta^0\colon H^1(G_K, \mbb{Q}_p/\mbb{Z}_p(1-p))\to H^2(G_K,\mbb{F}_p)$)
be the connection map arising from the exact sequence 
$0\to \mbb{F}_p\to  \mbb{Q}_p/\mbb{Z}_p(1-p) \overset{p}{\rightarrow} \mbb{Q}_p/\mbb{Z}_p(1-p)\to 0$.
Under
the isomorphism $K^{\times}/(K^{\times})^p\simeq H^1(G_K, \mbb{F}_p(1))$ via 
Kummer theory,
$\pi$ mod $(K^{\times})^p$
corresponds to the $1$-cocycle $[\pi]$ defined by 
$\sigma\mapsto \frac{\sigma(\pi_1)}{\pi_1}$ 
for $\sigma\in G_K$, which is clearly trivial on $G_1$. 
By Tate local duality and the fact that the image of $\delta^0$ is generated by $\bar{\psi}$ (cf.\ Lemma \ref{lem3}),
it suffices to show that $([\pi], \bar{\psi})$ maps to zero under the Tate pairing 
$H^1(G_K,\mbb{F}_p(1))\times H^1(G_K,\mbb{F}_p)\to \mbb{Q}/\mbb{Z}$
(in fact, this implies that $[\pi]$ lifts to $H^1(G_K,\mbb{Z}_p(p))$
and we obtain the desired result).
Let 
$\phi_{L/K}\colon K^{\times}/\mrm{Norm}_{L/K}(L^{\times})\overset{\sim}{\rightarrow}\mrm{Gal}(L/K)$
be the isomorphism of local class field theory.
It is enough to show that 
$\bar{\psi}(\phi_{L/K}(\pi))=0$.
Our assumption of $\pi$ implies that this equality certainly holds.
\end{proof}

Now we give an example for the non-fullness of our restriction functor
without any assumption on the choice of the uniformizer $\pi$.

\begin{proposition}
\label{Prop3}
The functor from torsion crystalline $\mbb{Z}_p$-representations of $G_{\mbb{Q}_p}$
with Hodge-Tate weights in $[0,p]$ to torsion $\mbb{Z}_p$-representations of $G_1$,
obtained by restricting the action of $G_{\mbb{Q}_p}$ to $G_1$, is not full.
\end{proposition}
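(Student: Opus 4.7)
The plan is to use the Tate curve $E_\pi$ over $\mbb{Q}_p$ with parameter $\pi$ and exploit the fact that its $p$-torsion becomes split after restriction to $G_1$. The argument is essentially the representation-theoretic counterpart of the one sketched in Remark \ref{rem2}.

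First, I would invoke Lemma \ref{Prop3lem} (stated in the following section) to the effect that $E_\pi[p]$ is a torsion crystalline $\mbb{Z}_p$-representation of $G_{\mbb{Q}_p}$ with Hodge-Tate weights in $[0,p]$; this places $E_\pi[p]$ in the source category of the restriction functor. I would also recall the classical description: as a $G_{\mbb{Q}_p}$-module, $E_\pi[p]$ is a non-split extension
\[
0\to \mbb{F}_p(1)\to E_\pi[p]\to \mbb{F}_p\to 0,
\]
whose extension class in $H^1(G_{\mbb{Q}_p},\mbb{F}_p(1))\simeq \mbb{Q}_p^\times/(\mbb{Q}_p^\times)^p$ corresponds, via Kummer theory, to the class of $\pi=p$. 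The non-splitting over $G_{\mbb{Q}_p}$ is then the statement that $\pi$ is not a $p$-th power in $\mbb{Q}_p^\times$.

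Next, I would observe that the Kummer cocycle $[\pi]\colon \sigma\mapsto \sigma(\pi_1)/\pi_1$ representing this class is trivial on $G_1=G_{\mbb{Q}_p(\pi_1)}$, since $\pi=\pi_1^p$ is a $p$-th power in $\mbb{Q}_p(\pi_1)^\times$. Consequently, the restriction $E_\pi[p]\vert_{G_1}$ is a split extension, and there exists a $G_1$-equivariant isomorphism
\[
\phi\colon \mbb{F}_p\oplus \mbb{F}_p(1)\xrightarrow{\sim} E_\pi[p].
\]
Since $\mbb{F}_p\oplus \mbb{F}_p(1)$ is obviously torsion crystalline with Hodge-Tate weights in $[0,1]\subset [0,p]$, both source and target of $\phi$ lie in $\mrm{Rep}^p_{\mrm{tor}}(G_{\mbb{Q}_p})$. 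However, $\phi$ cannot be $G_{\mbb{Q}_p}$-equivariant: any such isomorphism would force $E_\pi[p]$ to be split as a $G_{\mbb{Q}_p}$-module, contradicting the non-triviality of $\pi \bmod (\mbb{Q}_p^\times)^p$. This yields a $G_1$-equivariant (and hence $G_\infty$-equivariant) morphism that does not come from a $G_{\mbb{Q}_p}$-equivariant one, proving non-fullness.

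The only nontrivial input is Lemma \ref{Prop3lem}, which provides the torsion-crystalline structure on $E_\pi[p]$ with weights in $[0,p]$; once this is granted, the rest is standard Kummer theory applied to the Tate parameter and the fact that $\pi$ becomes a $p$-th power over $K_\infty$ (in fact already over $K(\pi_1)$). The main conceptual point to emphasize is that, although the extension class of $E_\pi[p]$ is killed by the restriction $H^1(G_{\mbb{Q}_p},\mbb{F}_p(1))\to H^1(G_1,\mbb{F}_p(1))$, both sides of the comparison are torsion crystalline with weights in $[0,p]$, so the non-fullness really lives in the restriction functor and not in the choice of ambient category.
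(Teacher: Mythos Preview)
Your proposal is correct and follows essentially the same approach as the paper: both take $T=E_\pi[p]$ and $T'=\mbb{F}_p\oplus\mbb{F}_p(1)$, invoke Lemma~\ref{Prop3lem} to place $E_\pi[p]$ in $\mrm{Rep}^p_{\mrm{tor}}(G_{\mbb{Q}_p})$, and then use that these are isomorphic over $G_1$ but not over $G_{\mbb{Q}_p}$. You simply spell out the Kummer-theoretic reason for this last fact, which the paper leaves implicit; one small quibble is that the fixed uniformizer $\pi$ of $\mbb{Q}_p$ need not equal $p$, but your argument only uses that $v_p(\pi)=1$, so this does not affect anything.
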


Let $v_K$ be a valuation of $K$ normalized such that $v_K(K^{\times})=\mbb{Z}$,
and we write $v_p:=v_{\mbb{Q}_p}$.

\begin{lemma}
\label{Prop3lem}
Let $K$ be a finite extension of $\mbb{Q}_p$ and $q\in \mbb{Q}^{\times}_p(K^{\times})^p$.
Let $E_q$ be the Tate curve over $K$ associated with $q$.
If $p\nmid v_K(q)$, then $E_q[p]$ is 
torsion crystalline with Hodge-Tate weights in $[0,p]$.
\end{lemma}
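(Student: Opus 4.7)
The plan is to reduce to the case $K = \mbb{Q}_p$ and then construct $E_{q_0}[p]$ as $\hat{T}(\hat{\mfM})$ for some $\hat{\mfM} \in \mrm{Mod}^{p,\hat{G},\mrm{cris}}_{/\mfS_\infty}$; by the appendix's Theorem~\ref{app} (which identifies this category as corresponding to torsion crystalline representations with Hodge--Tate weights in $[0,p]$) this yields the conclusion. For the reduction, the Tate uniformization $E_q(\overline{K}) = \overline{K}^\times / q^{\mbb{Z}}$ shows that $E_q[p]$ is generated over $\mbb{F}_p$ by $\zeta_p$ and any fixed $q^{1/p}$, with $G_K$-action determined by the Kummer cocycle $\sigma \mapsto \sigma(q^{1/p})/q^{1/p}$; hence the $G_K$-isomorphism class of $E_q[p]$ depends only on $q$ modulo $(K^\times)^p$. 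Writing $q = q_0 c^p$ with $q_0 \in \mbb{Q}_p^\times$ and $c \in K^\times$, we obtain $E_q[p] \cong E_{q_0}[p]|_{G_K}$, where $E_{q_0}$ is the Tate curve over $\mbb{Q}_p$. Since torsion crystalline representations restrict to torsion crystalline representations with the same HT bound (any $G_{\mbb{Q}_p}$-stable lattice in a crystalline $G_{\mbb{Q}_p}$-representation remains a $G_K$-stable lattice in a crystalline $G_K$-representation with the same HT weights), it suffices to prove $E_{q_0}[p]$ is torsion crystalline as a $G_{\mbb{Q}_p}$-representation with HT weights in $[0,p]$; the condition $p \nmid v_K(q)$ translates to $p \nmid v_p(q_0)$.

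\textbf{Construction over $\mbb{Q}_p$.} With $e = 1$ and $E(u) = u - p$, consider the torsion Kisin module of height $\leq p$ given by $\mfM = \mfS_1 e_1 \oplus \mfS_1 e_2$ with
\[
\vphi(e_1) = c_0^{-p} u^p e_1 + g(u) e_2, \qquad \vphi(e_2) = e_2,
\]
so that $\mfS_1 e_2 \cong \mfS_1(0)$ is a sub-Kisin-module and the quotient is isomorphic to $\mfS_1(p)$ in the notation of Remark~\ref{rem2}. By contravariance of $T_{\mfS}$, the representation $T_{\mfS}(\mfM)$ is an extension of $\mbb{F}_p$ by $\mbb{F}_p(p) \cong \mbb{F}_p(1)$ as $G_\infty$-modules, and the polynomial $g(u) \in \mbb{F}_p[u]$ can be chosen so this extension coincides with $E_{q_0}[p]|_{G_\infty}$. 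Next, equip $\whR \otimes_{\vphi, \mfS} \mfM$ with a $\hat{G}$-action by $\tau(e_2) = e_2$ and $\tau(e_1) = \hat{c}^{\,p} e_1 + h \cdot e_2$ for a suitable $h \in \whR$, chosen so that (i) $\tau$ commutes with $\vphi_{\whR} \otimes \vphi_{\mfM}$ (Def.~\ref{Liumod}(3)), (ii) the ``cris'' condition $\tau(x) - x \in u^p \vphi(\mft)(W(R) \otimes_{\vphi, \mfS} \mfM)$ holds for all $x \in \mfM$, and (iii) $\hat{T}(\hat{\mfM}) \cong E_{q_0}[p]$ as a $G_{\mbb{Q}_p}$-module. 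The key compatibility $\hat{c}^{\,p} - 1 \in u^p \vphi(\mft) W(R)$ follows from the defining relation $\vphi(\mft) = p E(0)^{-1} E(u) \mft$ together with the explicit product formula $\hat{c} = \prod_{n \geq 1} \vphi^n(E(u)/\tau(E(u)))$ recalled in Example 3.2.3 of \cite{Li3}.

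\textbf{Main obstacle.} The chief difficulty is matching the parameters $g(u)$ and $h$ so that the Kummer class of $\hat{T}(\hat{\mfM})$ equals the prescribed class $[q_0] \in \mbb{Q}_p^\times/(\mbb{Q}_p^\times)^p$ while the ``cris'' condition still holds. The hypothesis $p \nmid v_p(q_0)$ is essential here: it places $[q_0]$ in the ``uniformizer'' component of $\mbb{Q}_p^\times/(\mbb{Q}_p^\times)^p$, complementary to the ``units'' component $\mbb{Z}_p^\times/(\mbb{Z}_p^\times)^p$ that is already realized by finite flat representations (HT weights in $[0,1]$). This uniformizer component becomes accessible exactly when one allows HT weight $p$, which is precisely why we use $\mfS_1(p)$ (of height $p$) rather than its maximal model $\mfS_1(p)_0$ (of height $1$) in the quotient. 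The final step---that every class with $p \nmid v_p(q_0)$ is realized by an appropriate choice of $g(u)$---reduces to a direct computation of $\hat{T}$ via the embedding $\mfS \hookrightarrow W(R)$ sending $u \mapsto [\underline{\pi}]$, compared against the Kummer cocycle description of $E_{q_0}[p]$.
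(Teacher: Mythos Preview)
Your reduction to $K=\mbb{Q}_p$ matches the paper's, but the core of your argument rests on a misreading of Theorem~\ref{app}. That theorem concerns \emph{free} $(\vphi,\hat{G})$-modules only: it says that for $\hat{\mfM}\in\mrm{Mod}^{r,\hat{G}}_{/\mfS}$ the representation $\hat{T}(\hat{\mfM})\otimes\mbb{Q}_p$ is crystalline iff the ``cris'' condition holds. It gives no characterisation of which torsion representations lie in $\mrm{Rep}^r_{\mrm{tor}}(G_K)$. In the paper's own language, only the inclusion $\mrm{Rep}^r_{\mrm{tor}}(G_K)\subset \mrm{Rep}^{r,\hat{G},\mrm{cris}}_{\mrm{tor}}(G_K)$ is established (via Theorem~\ref{Th3}); the reverse inclusion is never claimed. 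So even if you succeed in exhibiting $\hat{\mfM}\in\mrm{Mod}^{p,\hat{G},\mrm{cris}}_{/\mfS_{\infty}}$ with $\hat{T}(\hat{\mfM})\simeq E_{q_0}[p]$, you have only shown $E_{q_0}[p]\in\mrm{Rep}^{p,\hat{G},\mrm{cris}}_{\mrm{tor}}(G_{\mbb{Q}_p})$, not that it is torsion crystalline, i.e.\ a quotient $L/L'$ of lattices in a crystalline $\mbb{Q}_p$-representation. Indeed, Remark~\ref{rem2} runs the logic in the opposite direction: it \emph{invokes} Lemma~\ref{Prop3lem} (together with Theorem~\ref{Th3}) to produce such an $\hat{\mfM}$. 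To salvage your approach you would have to lift your torsion $\hat{\mfM}$ to an exact sequence $0\to\hat{\mfL}\to\hat{\mfL}'\to\hat{\mfM}\to 0$ of \emph{free} $(\vphi,\hat{G})$-modules each satisfying the ``cris'' condition, and then apply Theorem~\ref{app} to $\hat{\mfL}$ and $\hat{\mfL}'$; you have not addressed this lifting problem, and it is far from automatic.

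The paper's proof is of an entirely different, global nature. After reducing to $K=\mbb{Q}_p$, it approximates $E_q$ by an elliptic curve $E/\mbb{Q}$ with $E[p]\simeq E_q[p]$ as $G_{\mbb{Q}_p}$-modules, arranged (by a further $\ell$-adic condition) to have $E[p]$ irreducible over $G_{\mbb{Q}}$. Serre's modularity conjecture (Khare--Wintenberger) then realises $E[p]\otimes\overline{\mbb{F}}_p$ as the reduction of a lattice in a crystalline $p$-adic representation attached to a cuspform of level prime to $p$; the hypothesis $p\nmid v_p(q)$ enters through Serre's weight recipe (Proposition~5(2) of \cite{Se}), which bounds the Hodge--Tate weights by $[0,p]$. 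Thus the crystalline lift is produced by automorphic methods rather than by any direct construction in $(\vphi,\hat{G})$-module theory.
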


\begin{proof}
We have a decomposition $q=q'q''$, where $q'\in \mbb{Q}^{\times}_p, v_K(q')>0$ 
and $q''\in (K^{\times})^p$.
Let $E_{q'}$ be the Tate curve over $\mbb{Q}_p$ associated with $q'$.
Then $E_{q'}[p]$ is a representation of $G_{\mbb{Q}_p}$ and 
we have an isomorphism $E_q[p]\simeq (E_{q'}[p])|_{G_K}$.
Hence we can reduce the case where $K=\mbb{Q}_p$.
Let $\ell>3$ be a prime number different from $p$ such that $-\ell$ 
is not a square in $\mbb{F}^{\times}_p$ (recall that $p$ is odd). 
Choose an elliptic curve $E_{(\ell)}$ over $\mbb{Q}_{\ell}$
which has good supersingular reduction.
Since $\ell>3$,  we have $\# E_{(\ell)}(\mbb{F}_{\ell})=1+\ell$.  
Thus the characteristic polynomial of $E_{(\ell)}[p]$ for 
the arithmetic Frobenius of $\ell$ is $X^2+\ell\in \mbb{F}_p[X]$,
which does not have a root in $\mbb{F}_p$.
Hence $E_{(\ell)}[p]$  
is an irreducible representation of $G_{\mbb{Q}_{\ell}}$
where $G_{\mbb{Q}_{\ell}}$ is the absolute Galois group of $\mbb{Q}_{\ell}$.
We define $\mcal{S}$ to be the set of $\mbb{Q}$-isomorphism classes of elliptic curves $E$ defined over $\mbb{Q}$
which satisfy the following conditions:
\vspace{-2mm}
\begin{enumerate}
\item[(a)] $E$ has multiplicative reduction at $p$ and $v_p(j(E))=v_p(j(E_q))(=-v_p(q))$ 
where $j(E)$ is the $j$-invariant of $E$;
\vspace{-2mm}

\item[(b)] $E[p]\simeq E_q[p]$ 
as $\mbb{F}_p$-representations of $G_{\mbb{Q}_p}$;
\vspace{-2mm}

\item[(c)] $E[p]\simeq E_{(\ell)}[p]$ 
as $\mbb{F}_p$-representations of $G_{\mbb{Q}_{\ell}}$.
\end{enumerate}
\vspace{-2mm}

\noindent
The set $\mcal{S}$ is infinite since 
elliptic curves over $\mbb{Q}$, 
whose coefficients of their defining equations 
are $p$-adically close enough to 
that of $E_{\pi}$
and also
$\ell$-adically close enough to 
that of $E_{(\ell)}$,
are contained in $\mcal{S}$.
Now we take any elliptic curve $E$ over $\mbb{Q}$ 
whose $\mbb{Q}$-isomorphism class is in the set $\mcal{S}$.
By the condition (c), $E[p]$ is irreducible as a representation
of $G_{\mbb{Q}}$.
By the classical Serre's modularity conjecture  (proved by Khare and Wintenberger)
and the well-known fact that $p$-adic representations arising from Hecke eigencusp forms of level prime to $p$
are crystalline,  
we know that $(E[p]\otimes_{\mbb{F}_p} \overline{\mbb{F}}_p)|_{G_{\mbb{Q}_p}}$ 
is the reduction of a lattice in some crystalline $\overline{\mbb{Q}}_p$-representation. 
Furthermore, by the condition (a) and  Proposition 5 (2) of \cite{Se},
we know that $(E[p])|_{G_{\mbb{Q}_p}}$  is torsion crystalline with Hodge-Tate weights in 
$[0,p]$.
Therefore, so is $E_q[p]$ by (b).
\end{proof}

\begin{proof}[Proof of Proposition \ref{Prop3}]
Put $T=E_{\pi}[p]$ and $T'=\mbb{F}_p\oplus \mbb{F}_p(1)$. 
We know that $T$ and $T'$ are in $\mrm{Rep}^{p}_{\mrm{tor}}(G_{\mbb{Q}_p})$ by Lemma \ref{Prop3lem}.
They are not isomorphic as representations of $G_{\mbb{Q}_p}$
but isomorphic as representations of $G_1$.
This gives the desired result.
\end{proof}

Here we suggest the following question. 

\begin{question}
\label{Que}
What is the necessary and sufficient condition  for that the functor 
\[
\mrm{Rep}^r_{\mrm{tor}}(G_K)\overset{\mrm{res}}{\longrightarrow} \mrm{Rep}_{\mrm{tor}}(G_{\infty}),\quad T\mapsto T|_{G_{\infty}}
\]
is fully faithful?
Furthermore, does this condition depend only on $e$ and $r$?
\end{question}

\begin{remark}
\label{Rem}
(1) We do not know whether the full faithfulness of the functor in Question  \ref{Que}
depends on the choice of the system $(\pi_n)_{n\ge 0}$ or not (see Proposition \ref{Prop2}).
However, it is not difficult to see the following:
Take two systems $(\pi_n)_{n\ge 0}$ and $(\pi'_n)_{n\ge 0}$
of $p^n$-th roots of a fixed uniformizer $\pi$ of $K$
(thus we have $\pi_0=\pi'_0=\pi$).
Put $K_{\infty}=\bigcup_{n\ge 0} K(\pi_n)$ (resp.\ $K'_{\infty}=\bigcup_{n\ge 0} K(\pi'_n)$)
and $G_{\infty}=\mrm{Gal}(\overline{K}/K_{\infty})$ 
(resp.\ $G'_{\infty}=\mrm{Gal}(\overline{K}/K'_{\infty})$).
Then, 
the restriction functor 
$\mrm{Rep}^r_{\mrm{tor}}(G_K)\overset{\mrm{res}}{\longrightarrow} \mrm{Rep}_{\mrm{tor}}(G_{\infty})$
is fully faithful if and only if 
the restriction functor 
$\mrm{Rep}^r_{\mrm{tor}}(G_K)\overset{\mrm{res}}{\longrightarrow} \mrm{Rep}_{\mrm{tor}}(G'_{\infty})$
is.
In fact, we can check this from the fact that 
$G_{\infty}$ and $G'_{\infty}$ are conjugate with each other by some element of $G_K$.

\noindent
(2) A torsion $\mbb{Z}_p$-representation of $G_K$ is called {\it finite flat} if
it is isomorphic to $G(\overline{K})$ as $\mbb{Z}_p$-representations of $G_K$ for some $p$-power order 
finite flat commutative group scheme $G$ over the integer ring of $K$.
If $r=1$, then the category $\mrm{Rep}^r_{\mrm{tor}}(G_K)=\mrm{Rep}^1_{\mrm{tor}}(G_K)$ coincides with the category
of finite flat representations of $G_K$ 
(this can be checked by, for example, Theorem 3.1.1 of \cite{BBM}). 
Breuil proved in Theorem 3.4.3 of \cite{Br2} that the restriction functor 
$\mrm{Rep}^1_{\mrm{tor}}(G_K)\overset{\mrm{res}}{\longrightarrow} 
\mrm{Rep}_{\mrm{tor}}(G_{\infty})$ is fully faithful for any $K$ without any restriction on $e$.
In fact, this assertion is true even if $p=2$ 
(cf.\ \cite{Kim}, \cite{La}, \cite{Li3}, proved independently. 
 Explicitly, see Corollary 4.4 of \cite{Kim}). 

\noindent
(3) If $e=1$ and $r<p-1$, then the fact that the restriction functor 
$\mrm{Rep}^r_{\mrm{tor}}(G_K)\overset{\mrm{res}}{\longrightarrow} 
\mrm{Rep}_{\mrm{tor}}(G_{\infty})$ is fully faithful has been already known (\cite{Br1}, the proof of Th\'eor\`em 5.2).

\noindent
(4) Observing known results as above and results shown in this paper, 
it seems that the answer of Question \ref{Que}
should be ``$e(r-1)<p-1$''.
\end{remark}

\appendix

\section{$(\vphi, \hat{G})$-modules associated with crystalline representations}

In Proposition 5.9 of \cite{GLS},
a necessary condition for representations arising 
from free $(\vphi,\hat{G})$-modules to be crystalline is given.
In this appendix, we show that the converse holds.
The result here justifies the subscript ``cris'' of the category 
$\mrm{Mod}^{r,\hat{G},\mrm{cris}}_{/\mfS_{\infty}}$ defined in Section 3. 

We continue to use the same notation as in Section 2.
For any integer $n\ge 0$, we define ideals of $W(R)$ as below:
\[ 
I^{[n]}W(R):=\{a\in W(R) ; \vphi^m(a)\in \mrm{Fil}^nA_{\mrm{cris}}\
{\rm for\ every}\ m\ge 0 \},\ 
I^{[n^+]}W(R):=I^{[n]}W(R)I_+W(R)
\] 
(see Section 5 of \cite{Fo2} for more precise information).
The proof of Lemma 3.2.2 of \cite{Li2} shows that 
$I^{[n]}W(R)$ is a principal ideal of $W(R)$ generated by $\vphi(\mft)^n$.
In particular we see that 
$u^p\vphi(\mft)$ is contained in $I^{[1^+]}W(R)=\vphi(\mft)I_+W(R)$.
Recall that $\hat{T}(\hat{\mfM})\otimes_{\mbb{Z}_p} \mbb{Q}_p$ is a 
semi-stable $\mbb{Q}_p$-representation of $G_K$ (Theorem \ref{Th1} (2))
and $\tau(x)-x\in I_+W(R)\otimes_{\vphi,\mfS} \mfM$
for any $x\in \mfM$.  
The main purpose of this appendix is to prove the following:

\begin{theorem}
\label{app}
Let $\hat{\mfM}\in \mrm{Mod}^{r,\hat{G}}_{/\mfS}$ be a $(\vphi,\hat{G})$-module.
The followings are equivalent:
\vspace{-2mm}
\begin{enumerate}
\item[$(1)$] $\hat{T}(\hat{\mfM})\otimes_{\mbb{Z}_p} \mbb{Q}_p$ is crystalline.  

\vspace{-2mm}
           
\item[$(2)$] For any $x\in \mfM$, we have
$
\tau(x)-x\in I^{[1^+]}W(R)\otimes_{\vphi,\mfS} \mfM.
$

\vspace{-2mm}           
           
\item[$(3)$] For any $x\in \mfM$, we have
$
\tau(x)-x\in u^p\vphi(\mft)(W(R)\otimes_{\vphi,\mfS} \mfM).
$
\end{enumerate}
\end{theorem}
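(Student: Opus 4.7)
The structure of my proof will be the cycle $(1) \Rightarrow (3) \Rightarrow (2) \Rightarrow (1)$. The implication $(3) \Rightarrow (2)$ is immediate, because $\nu(u)=0$ forces $u^p \in I_+W(R)$ and consequently $u^p\vphi(\mft) \in \vphi(\mft) I_+W(R) = I^{[1^+]}W(R)$. For $(1) \Rightarrow (3)$, I would appeal to Proposition 5.9 of \cite{GLS}: although originally stated in a slightly different setting, its proof applies verbatim to any free $(\vphi,\hat{G})$-module whose associated Galois representation is crystalline, and it delivers exactly condition $(3)$ (the same statement quoted in Theorem \ref{Th3} (2) for the torsion case, whose proof the author already notes is insensitive to the torsion hypothesis). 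The only new content is therefore the implication $(2) \Rightarrow (1)$.

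For this direction I plan to use the classical criterion that a semistable $\mbb{Q}_p$-representation is crystalline if and only if the monodromy operator $N$ on $D_{\mrm{st}}$ vanishes. By Theorem \ref{Th1} (2) the representation $V = \hat{T}(\hat{\mfM}) \otimes_{\mbb{Z}_p} \mbb{Q}_p$ is already semistable, and Liu's construction realizes $D_{\mrm{st}}(V)$ as a filtered $(\vphi,N)$-module built from $\mcal{R}_{K_0} \otimes_{\vphi,\mfS} \mfM[1/p]$, with the monodromy $N$ recovered from the infinitesimal action of $\tau$, concretely from the first-order coefficient of $\tau(x)-x$ when expanded in $t = \mrm{log}([\underline{\e}])$. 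The strategy is then to show that condition $(2)$ forces $\tau(x) - x$ to vanish to order at least two in $t$ for every $x \in \mfM$: the factor $\vphi(\mft)$ contributes one power of $t$ (since $\mft$ differs from $t$ by a unit in $A_{\mrm{cris}}$), while the factor in $I_+W(R)$ contributes another, so the first-order piece defining $N$ is killed; hence $N \equiv 0$ on $\mfM$ and, by extension of scalars, on all of $D_{\mrm{st}}(V)$, so $V$ is crystalline.

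The hard part will be the bookkeeping that converts an integral bound inside $W(R)$ into an honest second-order vanishing inside $B^+_{\mrm{cris}}$. This will require the principal-ideal description $I^{[n]}W(R) = \vphi(\mft)^n W(R)$ used in the proof of Lemma 3.2.2 of \cite{Li2}, a precise comparison between $\mft$ and $t$ in $A_{\mrm{cris}}$, and a verification that Liu's concrete realization of $N$ is indeed detected by $(\tau(x)-x)/t$ modulo $t$; the last of these may need a cocycle-style iteration of condition $(2)$ applied to $\tau^n$, and convergence of $n^{-1}\log\tau^n$ acting on $\mfM \subset (\whR\otimes_{\vphi,\mfS}\mfM)^{H_K}$. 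Once these identifications are in place, condition $(2)$ translates directly into the vanishing of $N$ on $\mfM$, closing the cycle.
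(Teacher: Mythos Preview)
Your cycle $(1)\Rightarrow(3)\Rightarrow(2)\Rightarrow(1)$ and the appeal to \cite{GLS} for $(1)\Rightarrow(3)$ match the paper exactly, and for $(2)\Rightarrow(1)$ your plan---recover the monodromy $N$ on $D$ from $t^{-1}\log(\tau)$ and show it vanishes---is also the paper's.

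There is, however, a conceptual slip in your heuristic for $(2)\Rightarrow(1)$. You write that the factor in $I_+W(R)$ ``contributes another'' power of $t$, so that $\tau(x)-x$ vanishes to order two in $t$. This is false: $I_+W(R)$ is the kernel of $\nu\colon W(R)\to W(\bar k)$, not the ideal generated by $t$; for example $u=[\underline{\pi}]\in I_+W(R)$ but $\theta(u)=\pi\neq 0$, so $u\notin tA_{\mrm{cris}}$. What is actually true, and what the paper uses, is that $(\tau-1)(x)/\vphi(\mft)\in I_+W(R)\otimes_{\vphi,\mfS}\mfM$, and the relevant monodromy $N_D$ lives on $D=\mcal{D}/I_+S_{K_0}\mcal{D}$, i.e.\ one reduces modulo $I_+$, not modulo $t$. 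Once you make this correction your outline goes through.

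For the higher-order terms in $\log(\tau)$ you are vague, and the paper's treatment is slightly different from what you sketch: rather than iterating condition $(2)$, it invokes the general fact (Proposition~2.4.1 of \cite{Li2.5}, valid for \emph{any} free $(\vphi,\hat G)$-module without assuming $(2)$) that $(\tau-1)^n(x)\in \vphi(\mft)^nW(R)\otimes_{\vphi,\mfS}\mfM$, writes $(\tau-1)^n(x)=\vphi(\mft)^n y_n$, and then shows via the elementary estimate $\tfrac{(n-2)!}{n}\in\mbb{Z}_p$ for large $n$ that the tail $\sum_{n\ge 2}(-1)^{n-1}\tfrac{\vphi(\mft)^{n-1}}{n}y_n$ lies in $\vphi(\mft)(B^+_{\mrm{cris}}\otimes_{\vphi,\mfS}\mfM)\subset I_+B^+_{\mrm{cris}}\otimes_{\vphi,\mfS}\mfM$. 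Condition $(2)$ is only used for the $n=1$ term.
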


Before giving a proof of this theorem,
we shall recall some known facts about $(\vphi,\hat{G})$-modules.
Let $\hat{\mfM}\in \mrm{Mod}^{r,\hat{G}}_{/\mfS}$ be a $(\vphi,\hat{G})$-module, 
and put 
$\mcal{D}=S_{K_0}\otimes_{\vphi, \mfS}\mfM$.
Then $\mcal{D}$ has a structure as a Breuil module which corresponds 
to the semi-stable representation $\hat{T}(\hat{\mfM})\otimes_{\mbb{Z}_p} \mbb{Q}_p$ of $G_K$.
(Breuil modules here are objects of ``$\mcal{MF}_S(\vphi, N)$'' defined in Section 6.1 of \cite{Br0}.
 It is useful for the reader to refer also Section 5 of \cite{Li1}.)
Denote by $N_{\mcal{D}}$ the monodromy operator of $\mcal{D}$ and 
define a $G_K$-action on 
$B^{+}_{\mrm{cris}}\otimes_S \mcal{D}
=B^{+}_{\mrm{cris}}\otimes_{\vphi,\mfS} \mfM$ by 
\[
g(a\otimes x)=\sum^{\infty}_{i=0}g(a)\gamma_i(-\mrm{log}([\underline{\e}(g)]))\otimes N^i_{\mcal{D}}(x)
\]
for $g\in G_K, a\in B^+_{\mrm{cris}}, x\in \mcal{D}$.
Here, $\underline{\e}(g):=g(\underline{\pi})/\underline{\pi}\in R$.
By the construction of the quasi-inverse of the functor $\hat{T}$ of Theorem \ref{Th1} (2) (\cite{Li2}, Section 3.2), 
this $G_K$-action is stable on $\whR\otimes_{\vphi,\mfS} \mfM\subset B^{+}_{\mrm{cris}}\otimes_{\vphi,\mfS} \mfM$
and it factors through $\hat{G}$, which gives the original $\hat{G}$-action of 
the $(\vphi,\hat{G})$-module $\hat{\mfM}$. 
From now on we put $t=-\mrm{log}([\underline{\e}(\tau)])$.
For any $n\ge 0 $ and any $x\in \mcal{D}$,
an induction on $n$ shows that 
\[
(\tau-1)^n(x)=\sum^{\infty}_{m=n}(\sum_{i_1+\cdots i_n=m, i_j\ge 0}\frac{m!}{i_1!\cdots i_n!})
\gamma_m(t)\otimes N^m_{\mcal{D}}(x)
\in B^{+}_{\mrm{cris}}\otimes_S \mcal{D}
\] 
and in particular $\frac{(\tau-1)^n}{n}(x)\to 0$ $p$-adically as $n\to \infty$.
Hence we can define 
\[
\mrm{log}(\tau)(x)=
\sum^{\infty}_{n=1}(-1)^{n-1}\frac{(\tau-1)^n}{n}(x) \in B^{+}_{\mrm{cris}}\otimes_S \mcal{D}.
\]
It is not difficult to check the equation $\mrm{log}(\tau)(x)=t\otimes N_{\mcal{D}}(x)$.
Consequently the monodromy operator $N_{\mcal{D}}$ can be reconstructed from the $\tau$-action of $\hat{\mfM}$ by the relation 
$\frac{1}{t}\mrm{log}(\tau)(x)=N_{\mcal{D}}(x)$.
Put $D=\mcal{D}/I_+S_{K_0}\mcal{D}$.
Then $D$ has a structure as a filtered $(\vphi,N)$-module over $K_0$ which corresponds to 
$\hat{T}(\hat{\mfM})\otimes_{\mbb{Z}_p} \mbb{Q}_p$
and the monodromy operator $N_D$ of $D$ is given by $N_{\mcal{D}}$ mod $I_+S_{K_0}\mcal{D}$
(\cite{Br0}, Section 6).
Hence  $\hat{T}(\hat{\mfM})\otimes_{\mbb{Z}_p} \mbb{Q}_p$ is crystalline 
if and only if $N_{\mcal{D}}$ mod $I_+S_{K_0}\mcal{D}$ is zero.

\begin{proof}[Proof of Theorem \ref{app}]
The implication $(1)\Rightarrow (3)$ follows from Proposition 5.9 of \cite{GLS}.
It is clear that $(3)$ implies $(2)$.
Thus it suffices to show the implication $(2)\Rightarrow (1)$.
Assume the condition $(2)$.
We use the same notation $\mcal{D}, N_{\mcal{D}}, D, N_D$ as the above.
We often regard $\mfM$ as a $\vphi(\mfS)$-submodule of $\mcal{D}$.
 Let $x\in \mfM$.
For any integer $n> 0$, it is shown in the proof of Proposition 2.4.1 of \cite{Li2.5} that
\begin{itemize}
\item[(A)] $(\tau-1)^n(x)\in I^{[n]}W(R)\otimes_{\vphi,\mfS} \mfM$;
\item[(B)] $\frac{(\tau-1)^n}{nt}(x)$ is well-defined in $A_{\mrm{cris}}\otimes_{\vphi,\mfS} \mfM$
and $\frac{(\tau-1)^n}{nt}(x)\to 0$ $p$-adically as $n\to \infty$. Therefore,  
we have $\frac{1}{t}\mrm{log}(\tau)(x)\in A_{\mrm{cris}}\otimes_{\vphi,\mfS} \mfM\subset B^+_{\mrm{cris}}\otimes_{\vphi, \mfS} \mfM$.
\end{itemize}
By (A), we can take $y_n\in W(R)\otimes_{\vphi,\mfS}\mfM$ such that 
$(\tau-1)^n(x)=\vphi(\mft)^ny_n$.
Then we have the equation
\[
(\ast):\ cN_{\mcal{D}}(x)=c\cdot\frac{1}{t}\mrm{log}(\tau)(x)=\frac{\tau-1}{\vphi(\mft)}(x)
+\sum^{\infty}_{n=2}(-1)^{n-1}\frac{\vphi(\mft)^{n-1}}{n}y_n.
\]
Here $c=\frac{t}{\vphi(\mft)}$, which is a unit of  $A_{\mrm{cris}}$ (\cite{Li2}, Example 3.2.3).
Note that $\frac{\tau-1}{\vphi(\mft)}(x)$ is contained in $I_+W(R)\otimes_{\vphi,\mfS} \mfM$ by the assumption $(2)$.

Now we claim that 
there exists an integer $n_0>1$ such that $\frac{(n-2)!}{n}$ is in $\mbb{Z}_p$ for any  $n>n_0$.
Admitting this claim, we proceed a proof of Theorem \ref{app}.
Consider the decomposition 
\[
\sum^{\infty}_{n=2}(-1)^{n-1}\frac{\vphi(\mft)^{n-1}}{n}y_n=
\vphi(\mft)\sum^{n_0}_{n=2}(-1)^{n-1}\frac{\vphi(\mft)^{n-2}}{n}y_n+
\vphi(\mft)\sum^{\infty}_{n=n_0+1}(-1)^{n-1}\frac{\vphi(\mft)^{n-2}}{n}y_n.
\] 
By the claim, we see that 
$\frac{\vphi(\mft)^{n-2}}{n}=
\frac{\vphi(\mft)^{n-2}}{(n-2)!}\cdot \frac{(n-2)!}{n}
=c^{-(n-2)}\gamma_{n-2}(t)\frac{(n-2)!}{n}
$
is contained in $A_{\mrm{cris}}$ for any $n>n_0$ and it goes to zero $p$-adically as $n\to \infty$. 
In particular, (the first term and) the second term of the above decomposition are
contained in $\vphi(\mft)(B^+_{\mrm{cris}}\otimes_{\vphi,\mfS} \mfM)$,
which is contained in $I_+B^+_{\mrm{cris}}\otimes_{\vphi,\mfS} \mfM$.
Hence $\sum^{\infty}_{n=2}(-1)^{n-1}\frac{\vphi(\mft)^{n-1}}{n}y_n$ is also
contained in $I_+B^+_{\mrm{cris}}\otimes_{\vphi,\mfS} \mfM$.
Note that $\nu(c)=1$ since $c=\frac{t}{\vphi(\mft)}=\prod^{\infty}_{n=0} \vphi^n(\frac{c^{-1}_0E(u)}{p})$
and $\nu(u)=0$,
and furthermore $\nu(\mft)=0$.
Therefore, by $(\ast)$ modulo $I_+B^+_{\mrm{cris}}\otimes_{\vphi, \mfS} \mfM$,
we obtain the relation 
$N_D(\bar{x})=0$ in 
$D=\mcal{D}/I_+S_{K_0}\mcal{D}\subset 
(B^+_{\mrm{cris}}\otimes_{\vphi, \mfS} \mfM)/(I_+B^+_{\mrm{cris}}\otimes_{\vphi, \mfS} \mfM)$
where $\bar{x}$ is the residue class of $x$.
Since the image of $\mfM$ in $D=\mcal{D}/I_+S_{K_0}\mcal{D}$ generates 
$D$ as a $K_0$-vector space, 
we obtain that $N_D=0$. This implies $(1)$.
Hence it suffices to show the claim. Let $v_p$ be the $p$-adic valuation with $v_p(p)=1$.
For any positive integer $n$, write $n=p^sm$ with $p\not{|}m$.
If $s=0$, it is clear that $\frac{(n-2)!}{n}\in \mbb{Z}_p$.
Suppose $s\ge 1$.
If $m\ge 2$, we have 
$v_p((n-2)!)\ge v_p((2p^s-2)!)\ge v_p(p^s!)\ge s=v_p(n)$.
If $m=1$ and $s\ge 3$,
we have $v_p((n-2)!)\ge v_p(p^{s-1}!)=\frac{1}{2}s(s-1)\ge s=v_p(n)$.
This finishes the proof. 
\end{proof}


\begin{thebibliography}{1000}

\bibitem[BBM]{BBM}
Pierre Berthelot, Lawrence Breen and William Messing,
\emph{Th\'eorie de Dieudonn\'e cristalline II},
      Lect. Notes in Math. 930, Springer, 1982.  

\bibitem[Br1]{Br0}
Christophe Breuil, 
\emph{Repr\'esentations {$p$}-adiques semi-stables et
      transversalit\'e de Griffiths}, 
      Math. Ann. {\bf 307}  (1997),
      191--224. 
      
\bibitem[Br2]{Br1}
Christophe Breuil, 
\emph{Une application du corps des normes},
      Compos. Math. {\bf 117} (1999) 
      189--203.

\bibitem[Br3]{Br2}
Christophe Breuil, 
\emph{Integral $p$-adic Hodge theory},
      in Algebraic geometry 2000, Azumino (Hotaka), 
      Adv. Stud. Pure Math., vol. 36, Math. Soc. Japan, 2002,
      51--80.


\bibitem[CL1]{CL1}
Xavier Caruso and Tong Liu,
\emph{Quasi-semi-stable representations},
      Bull. Soc. Math. France, {\bf 137} (2009), no. 2,
      185--223.
      
\bibitem[CL2]{CL2}
Xavier Caruso and Tong Liu,
\emph{Some bounds for ramification of $p^n$-torsion
      semi-stable representations}.      
      J.  Algebra, {\bf 325}  (2011),
      70--96. 

\bibitem[Fo1]{Fo}
Jean-Marc Fontaine,
\emph{Repr\'esentations p-adiques des corps locaux. I}, 
The Grothendieck
Festschrift, Vol. II, Progr. Math., {\bf 87},
Birkh\"auser Boston, Boston, MA, 1990,
249--309.

\bibitem[Fo2]{Fo2}
Jean-Marc Fontaine,
\emph{Le corps des p\'eriodes $p$-adiques}, 
Ast\'erisque (1994), no. 223,
59--111.

\bibitem[GLS]{GLS}
Toby Gee, Tong Liu and David Savitt, 
\emph{The Buzzard-Diamond-Jarvis conjecture for unitary groups},
      preprint, arXiv:1203.2552v2


\bibitem[Kim]{Kim}
Wansu Kim, 
\emph{The classification of $p$-divisible groups over 2-adic discrete valuation rings}, 
      Math. Res. Lett. {\bf 19} (2012), no. 1,
      121--141.
      
\bibitem[Kis]{Kis}
Mark Kisin, 
\emph{Crystalline representations and {$F$}-crystals},
Algebraic geometry and number theory, 
Progr. Math. {\bf 253},
Birkh\"auser Boston,
Boston, MA  (2006),
459--496.

\bibitem[La]{La}
Eike Lau,
\emph{A relation between Dieudonn\'e displays and crystalline Dieudonn\'e theory}, 
      preprint, arXiv:1006.2720v3



\bibitem[Li1]{Li1}
Tong Liu,
\emph{Torsion $p$-adic Galois representations and 
      a conjecture of Fontaine}, 
Ann. Sci. \'Ecole Norm. Sup. (4) {\bf 40} (2007), no. 4,
633--674.

\bibitem[Li2]{Li2}
Tong Liu,
\emph{A note on lattices in semi-stable representations}, 
      Math. Ann. {\bf 346} (2010),
117--138.

\bibitem[Li3]{Li2.5}
Tong Liu,
\emph{Lattices in filtered $(\vphi,N)$-modules}, 
To appear, J. Inst. Math. Jussieu (2012).

\bibitem[Li4]{Li3}
Tong Liu,
\emph{The correspondence between Barsotti-Tate groups and Kisin modules when $p=2$}, 
preprint.

\bibitem[Oz]{Oz}
Yoshiyasu Ozeki,
\emph{Torsion representations arising from $(\vphi,\hat{G})$-modules}, 
      preprint, arXiv:1202.1858v1
      
\bibitem[Se]{Se}
Jean-Pierre Serre,
\emph{Sur les repr\'esentations modulaires de degr\'e $2$ de 
      ${\rm Gal}(\overline{\bf Q}/{\bf Q})$}, 
      Duke Math. J, {\bf 54} (1987), no. 1, 179--230.




\end{thebibliography}
\end{document}